\theoremstyle{plain}
\newtheorem{thm}{Theorem}
\newtheorem{lem}{Lemma}
\newtheorem{proposition}{Proposition}
\theoremstyle{definition}
\newtheorem{defn}{Definition}
\theoremstyle{remark}
\newtheorem{rem}{Remark}
\numberwithin{equation}{section}
\newcommand{\rmnum}[1]{\romannumeral #1}
\newcommand{\Rmnum}[1]{\expandafter\@slowromancap\romannumeral #1@}
\begin{document}
\title{Hausdorff dimensions and quasisymmetric minimalities of some homogeneous Moran sets}
\author{Jun Li}
\address[Jun Li]{School of Mathematics, Guangxi University, Nanning, 530004, P.~R. China}
\email{lijun1999star@163.com}

\author{Yanzhe Li$^{*}$}
\address[Yanzhe Li]{School of Mathematics, Guangxi University; Guangxi Center for Mathematical Research; Center for Applied Mathematics of Guangxi(Guangxi University), Nanning, 530004, P.~R. China}
\email{liyz@gxu.edu.cn}

\author{Pingping Liu}
\address[Pingping Liu]{School  of Mathematics, Guangxi University, Nanning, 530004, P.~R. China}
\email{lpplrx@163.com}

\thanks{This work is supported by National Natural Science Foundation of China (No.12461015) and Guangxi Natural Science Foundation (2020GXNSFAA297040). }
\thanks{*Corresponding author. }
\thanks{These authors contributed equally to this work.}
\subjclass[2010]{28A78;28A80}
\keywords{homogeneous Moran set; Hausdorff dimension; quasisymmetric  minimality.}

\begin{abstract}
In this paper, we study the Hausdorff dimensions and the quasisymmetric  minimalities of some homogeneous Moran sets. We obtain a Hausdorff dimension formula for three classes of homogeneous Moran sets which satisfy some conditions. We also show that two classes of them with Hausdorff dimension 1 are quasisymmetrically  Hausdorff-minimal.
\end{abstract}

\maketitle

\section{Introduction}
\label{sec:intro}

The Hausdorff dimensions of the fractals sets is a hot research topic in the study of fractal geometry.
There are many important results about the Hausdorff dimensions of the homogeneous Moran sets. Feng, Wen and Wu\cite{fdj97} studied the Hausdorff dimensions of the homogeneous Moran sets and obtained the range of values of all homogeneous Moran sets, they also showed that the Hausdorff dimensions of some homogeneous Moran sets can reach the maximum or minimum value.
Wen and Wu\cite{wzywuj05} defined the homogeneous perfect sets by giving some restrictions on the gaps between the basic intervals of the homogeneous Moran sets, and  showed  the Hausdorff dimension formula of the homogeneous perfect sets  under some conditions.

In this paper, we obtain a Hausdorff dimension formula  of three classes homogeneous Moran sets, which generalizes the result in \cite{wzywuj05}.

 Let $(X,d_X)$ and $(Y,d_Y)$ be two metric spaces, and $f$ be a homeomorphism between $X$ and $Y$. We call $f$ a quasisymmetric mapping  if there is a homeomorphism $\eta:[0,\infty)\to [0,\infty)$, such that for all triples $a, b, x$ of  distinct points in $X$,
\begin{equation*}
  \frac{d_Y(f(x),f(a))}{d_Y(f(x),f(b))}\le\eta(\frac{d_X(x,a)}{d_X(x,b)}).
\end{equation*}
If $X$ and $Y$ are both $\mathbb{R}^{n}$, we say that $f$ is a $n$-dimensional quasisymmetric mapping.

 The quasisymmetric mappings contain the bi-Lipschitz mappings, however, some properties of them are quite different. The bi-Lipschitz mappings preserve the fractal dimensions, but the fractal dimensions of the fractal sets can be changed under some quasisymmetric mappings. We call a set $E\subset\mathbb{R}^{n}$ quasisymmetrically Hausdorff-minimal if $\dim_{H}f(E)\ge \dim_{H}E$ for all $n-$dimensional quasisymmetric mapping $f$, where $\dim_{H}E$ is denoted by the Hausdorff dimension of $E$.

The quasisymmetrically minimalities for Hausdorff dimensions of the sets have received a substantial amount of attentions in recent years.
 It is not difficult to prove that any set $E\subset \mathbb{R}^n$ with $\dim_{H}E=0$ is quasisymmetrically Hausdorff-minimal\cite{Ahl06}.
Kovalev\cite{klv06} and Bishop\cite{bcj99} obtained that if $E\subset \mathbb{R}$ and satisfies $0<\dim_{H}E<1$, then $E$ is not a quasisymmetrically Hausdorff-minimal set. Gehring and Vaisala\cite{gfw,gfw2} found that when $n\ge2$, any set $E\subset \mathbb{R}^n$ with $\dim_{H}E=n$ is quasisymmetrically Hausdorff-minimal. However, Tukia\cite{tp89} pointed out that a set $E\subset \mathbb{R}$ with $\dim_{H}E=1$ may not be quasisymmetrically Hausdorff-minimal.
So, there is a question: which sets in $\mathbb{R}$ with Hausdorff dimension 1 are quasisymmetrically Hausdorff-minimal?
Staples and Ward\cite{ssg98} obtained that all quasisymmetrically thick sets are  quasisymmetrically Hausdorff-minimal.
Hakobyan\cite{hh06} showed that the middle interval Cantor sets with Hausdorff dimension 1 are quasisymmetrically Hausdorff-minimal.
Hu and Wen\cite{hmd08} generalized the result of \cite{hh06} to the uniform Cantor sets with Hausdorff dimension 1 under the condition that the sequence $\{n_k\}$ is bounded.
Wang and Wen\cite{ww14} generalized the result of \cite{hmd08} without assuming the boundedness of $\{n_k\}$.
Dai et al.\cite{dyx11} obtained a large class of Moran sets with Hausdorff dimension 1 is quasisymmetrically Hausdorff-minimal.
Yang, Wu and Li\cite{yjj18},  Xiao and Zhang{\cite{XYQ}} showed that the homogeneous perfect sets with Hausdorff dimension 1 are quasisymmetrically Hausdorff-minimal under some conditions, which generalized the result of \cite{ww14}.

In this paper, we  prove that two classes of homogeneous Moran sets with Hausdorff dimension 1 are quasisymmetrically Hausdorff-minimal, which generalizes the results in  \cite{yjj18}  and \cite{XYQ}.

\bigskip

\section{Preliminaries}
\label{sec:pre}
\subsection{Homogeneous Moran Sets}
We recall the definition of the homogeneous Moran sets.

Let  $\left\{c_{k}\right\}_{k\ge1}$ be a sequence of positive real numbers and $\left\{n_{k}\right\}_{k\ge1}$ be a sequence of positive integers such that $n_k\ge 2$ and $n_kc_k<1$ for any $k\ge 1$.  For any $k\ge1$, let $D_{k}=\left\{i_{1}i_{2}\cdots i_{k}:1\le i_{j} \le n_{j},1\le j \le k\right\}$, $D_{0}=\emptyset$ and $D=\cup_{k\ge0}D_{k}$. If $\sigma=\sigma_{1}\sigma_{2}\cdots\sigma_{k}\in D_{k}$, $\tau=\tau_{1}\tau_{2}\cdots\tau_{m}(1\le \tau_j\le n_{k+j}, 1\le j \le m)$, then $\sigma*\tau =\sigma_{1}\sigma_{2}\cdots\sigma_{k}\tau_{1}\tau_{2}\cdots\tau_{m}\in D_{k+m}$.

\begin{defn}\label{HMS}(Homogeneous Moran sets \rm{\cite{hua00}})
Suppose that $I_0=[0,1]$ and $\mathcal I=\{I_\sigma:\sigma\in D \}$ is a collection of the closed subintervals of $I_0$. We call $I_0$ the initial interval. We say that the collection $\mathcal I$ satisfies the homogeneous Moran structure provided:
\begin{enumerate}
\item[\textup{(1)}] If $\sigma=\emptyset$, we have $I_{\sigma}=I_{0}$;
\item[\textup{(2)}] For any $k\ge1$ and $\sigma\in{D_{k-1}}$, $I_{\sigma * 1}, \cdots, I_{\sigma * n_{k}}$ are closed subintervals of $I_{\sigma}$  with $\min(I_{\sigma*(l+1)})\ge \max(I_{\sigma*l})$ for any $1\le l \le n_{k}-1$, which means the interiors of $I_{\sigma*l}$ and $I_{\sigma*(l+1)}$ are disjoint;
\item[\textup{(3)}] For any $k\ge1$ and $\sigma\in{D_{k-1}}$, $1\le i \le j \le n_{k}$, we have $$\frac{\left|I_{\sigma * i}\right|}{\left|I_{\sigma}\right|}=\frac{\left|I_{\sigma * j}\right|}{\left|I_{\sigma}\right|}=c_{k},$$
where $\left|A\right|$ denotes the diameter of the set $A(A\subset \mathbb{R})$. We call $c_k$ the $k$-order contracting ratio.
\end{enumerate}

If $\mathcal{I}$ satisfies the homogeneous Moran structure, let $E_{k}=\cup_{\sigma\in{D_{k}}}I_{\sigma}$ for any $k\ge0$,  then the  nonempty compact set
	$E=\cap_{k\ge0}E_{k}=E(I_{0},\{n_{k}\},\{c_{k}\})$ is called a homogeneous Moran set.
	For any $k\ge0$, let $\mathcal{I}_{k}=\left\{I_{\sigma}:\sigma\in{D_{k}}\right\}$, then any $I_{\sigma}\in \mathcal{I}_{k}$ is called a $k$-order basic interval of $E$. We use $\mathcal{M}(I_{0},\{n_{k}\},\{c_{k}\})$ to denote the class of all homogeneous Moran sets associated with $I_{0},\{n_{k}\},\{c_{k}\}$.

\end{defn}

\bigskip
We give some marks for further discussions. For any $k\ge 1$ and $\sigma \in D_{k-1}$,
$1\le i \le n_k-1$, let
\begin{equation}
 \begin{aligned}
     \min(I_{\sigma*1})-\min(I_{\sigma})&=\eta_{\sigma,0};\\
     \min(I_{\sigma*(i+1)})-\max(I_{\sigma*i})&=\eta_{\sigma,i};\\
     \max(I_{\sigma})-\max(I_{\sigma*n_k})&=\eta_{\sigma,n_k},\nonumber
 \end{aligned}
 \end{equation}
then for any $k\ge 1$, $\{\eta_{\sigma,l}:\sigma \in D_{k-1}, 0\le l \le n_k\}$ is a sequence of nonnegative real numbers. For any  $\sigma \in D_{k-1}, 1\le l \le n_k-1$, we call $\eta_{\sigma,l}$ the length of a $k$-order gap of $E$.

For any $k\ge 1$, let $\bar\alpha_k$ be the maximum value of the length of a $k$-order gap  of $E$ and $\underline\alpha_k$ be the minimum value of the length of a $k$-order gap of $E$, which means
$$\bar\alpha_k=\max\limits_{\sigma\in D_{k-1}, 1\leq j\leq n_k-1}\eta_{\sigma,j},\ \underline\alpha_k=\min\limits_{\sigma\in D_{k-1}, 1\leq j\leq n_k-1}\eta_{\sigma,j}.$$
Let $N_k$ be the number of the $k$-order basic intervals of $E$ and $\delta_k$ be the length of any $k$-order basic interval of $E$, then
\begin{align}
\nonumber
N_k=\prod_{i=1}^{k} n_i, \delta_k=\prod_{i=1}^{k} c_i.
\end{align}
Let $l(E_k)$ be the total length of all $k$-order basic intervals of $E$, then $l(E_k)=N_k\delta_k$.

\begin{rem}
If $k> 1$, then  the cardinality of $\{\eta_{\sigma,l}:\sigma \in D_{k-1}, 0\le l \le n_k\}$ is $N_{k-1}(n_{k}+1)$. If $k=1$,  the cardinality of $\{\eta_{\sigma,l}:\sigma \in D_{k-1}, 0\le l \le n_k\}$ is $n_1+1$.
\end{rem}

\begin{rem}\label{rm} For any  $k\ge 1$, $\sigma_1 \in D_{k-1}$, $\sigma_2 \in D_{k-1}$, $\sigma_1\ne \sigma_2$  and $0\le l \le n_k$, , $\eta_{\sigma_1,l}$ may not be equal to $\eta_{\sigma_2,l}$. But if $E=E(I,\{n_{k}\}, \{c_{k}\},\{\eta_{k,j}\})$ is a homogeneous perfect set(the definition can be seem in \cite{wzywuj05}), then for any  $k\ge 1$, $\sigma_1 \in D_{k-1}$, $\sigma_2 \in D_{k-1}$, $\sigma_1\ne \sigma_2$  and $0\le l \le n_k$, $\eta_{\sigma_1,l}=\eta_{\sigma_2,l}=\eta_{k,l}$.
\end{rem}

\begin{rem}
More results about the fractal dimensions and the quasisymmetric minimalities of the homogeneous Moran sets can be found in \cite{lss24,LW10,LW11,CWW17,LFY21,DDW23,LLY25}.

\end{rem}
\bigskip

\subsection{Some Lemmas}
The following lemmas will play important roles in the proof of the theorems of this paper.

\begin{lem}\label{l1}\textmd{\rm (Mass distribution principle \rm{\cite{wen00}},\rm{\cite{fkj97}})}
Suppose that $s\ge0$, let $\mu$ be a mass distribution  on a Borel set  $E\subseteq\mathbb{R}${\rm(}which means $\mu$ is a positive and finite Borel measure on $E${\rm)}\begin{itemize}
  \item[(i)]If there are two positive constants $c_1$ and $\eta_1$, such that
  $\mu(U)\le c_1\left | U \right |^{s}$ for any set $U\subseteq\mathbb{R}$ with $0\le\left | U \right | \le \eta_1$,
  then $\dim_{H}E\ge s$;
  \item[(ii)]If there are two positive constants $c_2$ and $\eta_2$, such that $\mu(B(x,r))\le c_2r^s$,
  for all $x\in E$ and $0\le r\le\eta_2$, then $\dim_{H}E\ge s$.

\end{itemize}

It is noteworthy that $\mathrm{(i)}$ and $\mathrm{(ii)}$ are equivalent.
\end{lem}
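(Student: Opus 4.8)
The plan is to prove (i) directly from the definition of Hausdorff measure, and then to obtain (ii) for free by checking that hypotheses (i) and (ii) are interchangeable up to harmless multiplicative constants.

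First I would handle (i). Given any countable cover $\{U_i\}_{i\ge 1}$ of $E$ by sets of diameter at most $\eta_1$, I would use that $\mu$ is a Borel probability measure carried by $E$, together with countable subadditivity, to write
\[
1 = \mu(E) \le \sum_{i} \mu(U_i) \le c_1 \sum_{i} |U_i|^{s},
\]
so that $\sum_i |U_i|^{s} \ge 1/c_1$ for every such cover. Taking the infimum over covers of mesh $\le \delta \le \eta_1$ gives $\mathcal{H}^{s}_{\delta}(E) \ge 1/c_1$, and letting $\delta \to 0$ yields $\mathcal{H}^{s}(E) \ge 1/c_1 > 0$; hence $\dim_{H}E \ge s$.

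Next I would establish the equivalence of the two hypotheses. For the implication "(i) $\Rightarrow$ (ii)", I would note that a ball $B(x,r)\subset\mathbb{R}$ is an interval of diameter $2r$, so for $0<r<\eta_1/2$ one has $\mu(B(x,r)) \le c_1(2r)^{s}$, i.e. (ii) holds with $c_2 = 2^{s}c_1$ and $\eta_2 = \eta_1/2$. For "(ii) $\Rightarrow$ (i)", I would take any set $U$ with $0<|U|<\eta_2/2$: if $U\cap E=\emptyset$ then $\mu(U)=0$ and there is nothing to prove; otherwise, choosing $x\in U\cap E$, one has $U\subseteq B(x,2|U|)$ with $2|U|<\eta_2$, so $\mu(U)\le\mu(B(x,2|U|))\le c_2(2|U|)^{s}$, i.e. (i) holds with $c_1 = 2^{s}c_2$ and $\eta_1 = \eta_2/2$. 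Feeding either hypothesis into the argument of the previous paragraph then gives $\dim_{H}E\ge s$ in both cases.

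I do not expect a genuine obstacle here: this is the classical mass distribution principle. The only two points requiring a moment's care are the treatment of covering sets that miss $E$ (which contribute zero $\mu$-mass, since $\mu$ is supported on $E$) and the bookkeeping of the factors $2^{s}$ that appear when passing between the diameter of a set and the radius of a ball containing it.
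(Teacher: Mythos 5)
Your proof is correct: the covering argument for (i) and the factor-of-two passage between the diameter condition and the ball condition are exactly the classical argument for the mass distribution principle. The paper itself does not prove this lemma but quotes it from \cite{wen00} and \cite{fkj97}, and your argument coincides with the standard proof given there, so there is nothing to reconcile.
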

The mass distribution principle is a useful tool to estimate the lower bound of the Hausdorff dimensions of the sets.
\bigskip

For any interval $I\subseteq\mathbb{R}$ and $\rho>0$, let $\rho I$ be the interval which has the same center with $I$ and length of it is $\rho \left|I\right|$. We have the following lemma.

\begin{lem}\label{l2}{\rm{(\cite{wjm93})}}
Let $f:\mathbb{R}\to \mathbb{R}$ be a \text{\rm1}-dimensional quasisymmetric mapping, then
 for any two intervals $I'$, $I $ with $I'\subseteq I$,
 there exist positive real numbers $\lambda>0$, $K_{\rho}>0$ and $0<p\le 1\le q$ such that
	$$\lambda(\frac{|I^{'}|}{\left|I\right|})^{q}\le \frac{|f(I^{'})|}{\left|f(I)\right|}\le 4(\frac{|I^{'}|}{\left|I\right|})^{p},\quad
\frac{\left|f(\rho I)\right|}{\left|f(I)\right|}\le K_{\rho}.$$
\end{lem}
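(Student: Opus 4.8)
The plan is to deduce all three estimates from two standard properties of a quasisymmetric homeomorphism $f\colon\mathbb{R}\to\mathbb{R}$. First, such an $f$ is strictly monotone; since $-f$ is quasisymmetric with the same $\eta$ and satisfies $|(-f)(A)|=|f(A)|$, we may assume $f$ is increasing, so that it carries each closed interval $[x,y]$ onto $[f(x),f(y)]$ with $|f([x,y])|=f(y)-f(x)$. Second — the only non-elementary input — $f$ is \emph{power quasisymmetric}: its control function may be taken in the form $\eta(t)=H\max\{t^{\alpha},t^{1/\alpha}\}$ for some $H\ge1$ and $\alpha\in(0,1]$ depending only on $f$. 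This improvement of the quasisymmetry condition is where the connectedness of $\mathbb{R}$ enters; see \cite{ssg98,wjm93}, which also contain the present lemma. With it in hand, the rest is routine interval bookkeeping.

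For the upper bound, fix $I'=[c,d]\subseteq I=[a,b]$, so $a\le c\le d\le b$; if $I'$ is a single point the bound is trivial, and if $|I'|\ge\tfrac12|I|$ it is also immediate since then $\eta(2|I'|/|I|)\ge\eta(1)=H\ge1\ge|f(I')|/|f(I)|$. So assume $c<d$ and $|I'|<\tfrac12|I|$. From $(b-c)+(d-a)=|I|+|I'|\ge|I|$ we get $b-c\ge\tfrac12|I|$ or $d-a\ge\tfrac12|I|$. In the first case $c<d<b$, and applying the quasisymmetry inequality at the point $c$ to the triple $c,d,b$, then using $f(b)-f(c)\le f(b)-f(a)=|f(I)|$, gives $|f(I')|=f(d)-f(c)\le\eta\!\big((d-c)/(b-c)\big)\,(f(b)-f(c))\le\eta\!\big(2|I'|/|I|\big)\,|f(I)|$; the second case is symmetric, applying the inequality at $d$ to the triple $d,c,a$. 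Inserting $\eta(t)=H\max\{t^{\alpha},t^{1/\alpha}\}$ and a routine estimate yield $|f(I')|/|f(I)|\le C\,(|I'|/|I|)^{\alpha}$ for a constant $C$ depending only on $f$; this is the asserted inequality with $p=\alpha$ (the constant being the one displayed as $4$ above).

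For the lower bound, note that $f^{-1}$ is again a quasisymmetric homeomorphism of $\mathbb{R}$, hence power quasisymmetric, so the bound just proved applies to $f^{-1}$ and to the nested closed intervals $f(I')\subseteq f(I)$; since $f^{-1}(f(I))=I$ and $f^{-1}(f(I'))=I'$, this reads $|I'|/|I|\le C'\,(|f(I')|/|f(I)|)^{p'}$ for suitable $C'\ge1$ and $p'\in(0,1]$. Solving for the image ratio gives $|f(I')|/|f(I)|\ge\beta\,(|I'|/|I|)^{q}$ with $\beta=(C')^{-1/p'}>0$ and $q=1/p'\ge1$. The dilation estimate is then immediate: if $\rho\le1$ then $\rho I\subseteq I$, so $|f(\rho I)|\le|f(I)|$ and $K_{\rho}=1$ works; if $\rho>1$ then $I\subseteq\rho I$ and the lower bound applied to that pair gives $|f(I)|/|f(\rho I)|\ge\beta\rho^{-q}$, whence $|f(\rho I)|/|f(I)|\le\beta^{-1}\rho^{q}=:K_{\rho}$. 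Since $p=\alpha\le1\le q$, the exponents are of the required type.

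The only substantive point is the second ingredient — upgrading the abstract control function $\eta$ to a two‑sided power function — which is exactly the step that uses the topology of the line; the three‑point computation for the upper bound and the passage to $f^{-1}$ for the lower and dilation bounds are elementary.
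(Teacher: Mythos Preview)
The paper does not prove this lemma at all; it is quoted from \cite{ssg98,wjm93} and used as a black box, so there is no ``paper's proof'' to compare your argument against. Your argument is therefore supplying what the paper omits.

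Your proof is essentially correct and follows the standard route: reduce to increasing $f$, invoke power quasisymmetry to obtain the upper Hölder-type bound via a three-point estimate, and then dualize through $f^{-1}$ to get the lower bound and the dilation estimate. The logic is sound throughout. Two small remarks. First, in the case $|I'|\ge\tfrac12|I|$ your detour through $\eta$ is unnecessary and a bit confusing; it suffices to note that $|f(I')|/|f(I)|\le1$ while $4(|I'|/|I|)^{p}\ge4\cdot2^{-p}\ge2$. Second, and more to the point, your argument produces a constant $C=C(f)$ in the upper bound, not the specific value $4$; you acknowledge this parenthetically, but it is worth being explicit that the absolute constant $4$ (as opposed to some $C$ depending on the quasisymmetry data) requires a sharper computation than the generic power-quasisymmetry bound you invoke. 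For every use of this lemma in the present paper a constant depending only on $f$ is entirely sufficient, so this does not affect correctness in context.
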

Lemma \ref{l2} gives the  relationship  between the lengths for the image sets  and the lengths for the original sets of the quasissymmertic mappings.

\bigskip

\section{Main results}
Theorem \ref{thm} and Theorem \ref{thm2} are our main results.

\begin{thm}\label{thm}
    Let
	$E\in \mathcal{M}(I_{0},\left\{n_{k}\right\},\left\{c_{k}\right\})$ be a homogeneous Moran set which satisfies the following condition:
there exist two sequences of nonnegative real numbers $\{L_k\}_{k\ge1}$ and $\{R_k\}_{k\ge 1}$, such that
\begin{equation}
 \begin{aligned}
    \eta_{\sigma,0}=L_{k+1},\quad
    \eta_{\sigma,n_{k+1}}=R_{k+1} \nonumber
 \end{aligned}
 \end{equation}
 for any $k\ge 0$, $\sigma\in D_{k}$.

And if for any $k\ge 1$, at least one of the following three conditions is satisfied:
\begin{itemize}
  \item[(A)]there exists $\omega_1>0$, such that $\bar\alpha_k\le \omega_1 \underline\alpha_k$;
  \item[(B)]there exists $\omega_2>0$, such that $\bar\alpha_k\le \omega_2\cdot c_1c_2\cdots c_k$;
  \item[(C)]there exists $\omega_3>0$, such that $n_k\underline\alpha_k\ge \omega_3\cdot c_1c_2\cdots c_{k-1}$.
\end{itemize}

Then
\begin{equation}\label{311}
\dim_HE=\liminf\limits_{k \to\infty}  \frac{\log n_1n_2\cdots n_k}{-\log(\delta_k-L_{k+1}-R_{k+1})}.
\end{equation}
\end{thm}

\begin{rem}\label{rm1}
 If $E=E(I_{0},\{n_{k}\}, \{c_{k}\},\{\eta_{k,j}\})$ is a homogeneous perfect set(the definition can be seem in \cite{wzywuj05}),  then $E\in \mathcal{M}(I_{0},\left\{n_{k}\right\},\left\{c_{k}\right\})$  is a homogeneous Moran set with $\eta_{\sigma,0}=\eta_{k,0}=L_{k}$, $\eta_{\sigma,n_k}=\eta_{k,n_k}=R_{k}$, and $\eta_{\sigma,l}=\eta_{k,l}$ for any $k\ge 1$, $\sigma\in D_{k-1}$ and $1\le l\le n_k-1$, if  $E$ satisfies the condition (A) or (B) or (C) in Theorem 1.2 of \cite{wzywuj05}, it is obvious that $E$ satisfies the condition (A) or (B) or (C) in Theorem \ref{thm} of this paper. On the other hand, if $E\in \mathcal{M}(I_{0},\left\{n_{k}\right\},\left\{c_{k}\right\})$  is a homogeneous Moran set which satisfies the conditions of Theorem \ref{thm} of this paper, $E$ may not be a homogeneous perfect set by Remark \ref{rm} of this paper. Notice that equation (1) of Theorem 1.2 of \cite{wzywuj05} is equal to equation (\ref{311}) of this paper, thus Theorem \ref{thm} of this paper generalizes Theorem 1.2 of \cite{wzywuj05}.
\end{rem}

\bigskip

\begin{thm}\label{thm2}
	    Let
	$E\in \mathcal{M}(I_{0},\left\{n_{k}\right\},\left\{c_{k}\right\})$ be a homogeneous Moran set which satisfies the following condition:
there exist two sequences of nonnegative real numbers $\{L_k\}_{k\ge1}$ and $\{R_k\}_{k\ge 1}$, such that
\begin{equation}
 \begin{aligned}
    \eta_{\sigma,0}=L_{k+1},\quad
    \eta_{\sigma,n_{k+1}}=R_{k+1} \nonumber
 \end{aligned}
 \end{equation}
 for any $k\ge 0$, $\sigma\in D_{k}$.

If $\dim_{H}E=1$, and for any $k\ge 1$, at least one of the following two conditions is satisfied:
\begin{itemize}
  \item[(A)]there exists $\omega_1>0$, such that $\bar\alpha_k\le \omega_1 \underline\alpha_k$;
  \item[(B)]there exists $\omega_2>0$, such that $\bar\alpha_k\le \omega_2\cdot c_1c_2\cdots c_k$.
\end{itemize}

Then we have $\operatorname{dim}_{H}f(E)=1$ for any \text{\rm1}-dimensional quasisymmetric mapping $f$, which implies that $E$ is a quasisymmetrically  Hausdorff-minimal set.

\end{thm}

\begin{rem}
Similar to the analysis in Remark \ref{rm1}, we can obtain that if $E=E(I_{0},\{n_{k}\}, \\\{c_{k}\},\{\eta_{k,j}\})$ is a homogeneous perfect set which satisfies the conditions in Theorem 1 of \cite{yjj18} or Theorem 2.2 of \cite{XYQ}, then $E\in \mathcal{M}(I_{0},\left\{n_{k}\right\},\left\{c_{k}\right\})$ is a homogeneous Moran set which satisfies the conditions of Theorem \ref{thm} of this paper. On the other hand, if $E\in \mathcal{M}(I_{0},\left\{n_{k}\right\},\left\{c_{k}\right\})$  is a homogeneous Moran set which satisfies the conditions of Theorem \ref{thm} of this paper, $E$ may not be a homogeneous perfect set by Remark \ref{rm} of this paper. Thus Theorem \ref{thm2} of this paper generalizes Theorem 1 of \cite{yjj18} and Theorem 2.2 of \cite{XYQ}.

\end{rem}
\bigskip

\section{The first reconstruction of the Homogeneous Moran sets}

For the convenience of further discussions, we reconstruct the homogeneous Moran set $E\in \mathcal{M}(I_{0},\left\{n_{k}\right\},\left\{c_{k}\right\})$  which satisfies the conditions of Theorem \ref{thm} and Theorem \ref{thm2}.

For any $k\ge 0$, $\sigma\in D_{k}$, let $I_{\sigma}^{*}$ be a closed subinterval of $I_{\sigma}$ satisfying the following conditions:
\begin{enumerate}
  \item[(a)]$\min(I^{*}_\sigma)-\min(I_\sigma)=\eta_{\sigma,0}=L_{k+1},\quad
            \max(I_\sigma)-\max(I^{*}_\sigma)=\eta_{\sigma,n_{k+1}}=R_{k+1}$;
  \item[(b)]$\left | I^{*}_ {\sigma}\right | =\sum_{l=1}^{n_{k+1}-1} \eta_{\sigma,l}+n_{k+1}c_1c_2\cdots c_{k+1}=\delta_k-L_{k+1}-R_{k+1}$.
\end{enumerate}

 Let $I_{0}^{*}=I_{\emptyset}^{*}$, denote $\delta^*_{0}=\left|I_{0}^{*}\right|$, $\delta^*_{k}=\left|I_{\sigma}^{*}\right|$ for any $ k\ge 1$ and $\sigma\in D_k$. Write $E_{k}^{*}=\cup_{\sigma\in{D_{k}}}I_{\sigma}^{*}$ for any $k\ge 0$, then
\begin{equation}
E=\bigcap_{k\ge 0}\bigcup_{\sigma \in D_k}I_{\sigma}=\bigcap_{k\ge 0}\bigcup_{\sigma \in D_k}I^{*}_{\sigma}=\bigcap_{k\ge 0}E_{k}^{*}.
\end{equation}
We call $I^*_\sigma$ a $k$-order first reconstructed basic interval of $E$ for any $ k\ge 1$ and $\sigma\in D_k$.

In fact, $E\in \mathcal{M}(I_{0}^{*},\left\{n_{k}^{*}\right\},\left\{c_{k}^{*}\right\})$ is a homogeneous Moran set with the following parameters for any $k\ge 0$, and $\sigma \in D_k$:
\begin{enumerate}	\item[\textup{(1)}]$I_{0}^{*}=I_{0}-[\min(I_{0}),\min(I_{0})+\eta_0)-(\max(I_{0})-\eta_{n_1},\max(I_{0})]$;
	\item[\textup{(2)}]$c_{k+1}^{*}=\frac{\delta^*_{k+1}}{\delta^*_{k}}$, $n_{k+1}^{*}=n_{k+1}$.
\end{enumerate}

 For any $k\ge 1$ and $\sigma \in D_{k-1}$,
$1\le i \le n_k-1$, let
\begin{equation}
 \begin{aligned}
     \min(I^{*}_{\sigma*1})-\min(I^{*}_{\sigma})&=\eta^{*}_{\sigma,0};\\
     \min(I^{*}_{\sigma*(i+1)})-\max(I^{*}_{\sigma*i})&=\eta^{*}_{\sigma,i};\\
     \max(I^{*}_{\sigma})-\max(I^{*}_{\sigma*n_k})&=\eta^{*}_{\sigma,n_k}.\nonumber
 \end{aligned}
 \end{equation}
Then for any $k\ge 1$, $\{\eta^{*}_{\sigma,l}:\sigma \in D_{k-1}, 0\le l \le n_k\}$ is a sequence of nonnegative real numbers,  and for any  $\sigma \in D_{k-1}, 1\le l \le n_k-1$, we call $\eta^{*}_{\sigma,l}$ the length of a $k$-order  first reconstructed gap of $E$.

For any $k\ge 0$, $\sigma\in D_{k}$ and $1\le l \le n_{k+1}-1$, we have
\begin{align}\label{aa}
\eta^*_{\sigma,l}=\eta_{\sigma,l}+\eta_{\sigma*l,n_{k+2}}+\eta_{\sigma*(l+1),0}=\eta_{\sigma,l}+R_{k+2}+L_{k+2}
  ,\end{align}\begin{align}\label{ab}
 \eta^*_{\sigma,0}=\eta_{\sigma*1,0}=L_{k+2}, \quad
 \eta^*_{\sigma,n_{k+1}}=\eta_{\sigma*{n_{k+1}},n_{k+2}}=R_{k+2}.
\end{align}
Define $ L^*_{k+1}=\eta_{\sigma*1,0}=L_{k+2}, \quad
   R^*_{k+1}=\eta_{\sigma*{n_{k+1}},n_{k+2}}=R_{k+2}.$

For any $k\ge 1$, let $N^*_{k}$ be the number of the $k$-order first reconstructed basic intervals of $E$ and $\delta^*_{k}$ be the length of any $k$-order first reconstructed basic interval of $E$, we have
 $$N^*_{k}=n^*_1n^*_2\cdots n^*_{k},\quad \delta^*_{k}=\delta^*_0c^*_1c^*_2\cdots c^*_{k}.$$

For any $k\ge0, \sigma \in D_{k}$, let $e_{k+1}=\sum_{l=1}^{n_{k+1}-1}\eta_{\sigma,l}, e^{*}_{k+1}=\sum_{l=1}^{n_{k+1}-1}\eta^{*}_{\sigma,l}$,
then by (\ref{aa}),
\begin{equation}
e^*_{k+1}=\sum_{l=1}^{n_{k+1}-1}(\eta_{\sigma,l}+R_{k+2}+L_{k+2})=e_{k+1}+(n_{k+1}-1)(R_{k+2}+L_{k+2}). \nonumber
\end{equation}

For any $k\ge 0$, let $\bar\alpha^*_{k+1}$ be the maximum value of the length of a $(k+1)$-order  first reconstructed  gap of $E$ and $\underline\alpha^*_{k+1}$ be the minimum value of  the length of a $(k+1)$-order  first reconstructed  gap  of $E$, which means
$$\bar\alpha^*_{k+1}=\max\limits_{\sigma\in D_{k}, 1\leq j\leq n_{k+1}-1}\eta^*_{\sigma,j},
\underline\alpha^*_{k+1}=\min\limits_{\sigma\in D_{k}, 1\leq j\leq n_{k+1}-1}\eta^*_{\sigma,j},$$
then by (\ref{aa}) we  obtain
\begin{equation}\label{444}
\bar\alpha^*_{k+1}=\bar\alpha_{k+1}+L_{k+2}+R_{k+2};
\end{equation}
\begin{equation}\label{445}
\underline\alpha^*_{k+1}=\underline\alpha_{k+1}+L_{k+2}+R_{k+2}.
\end{equation}
Obviously,
\begin{equation}\label{42}
\underline\alpha_{k+1}\le \underline\alpha^*_{k+1},\quad  \bar\alpha_{k+1}\le\bar\alpha^*_{k+1}.
\end{equation}

Notice that for any $k\ge 0, \sigma \in D_{k}$, $\eta^*_{\sigma,0}+\eta^*_{\sigma,n_{k+1}}=\eta_{\sigma*1,0}+\eta_{\sigma*{n_{k+1}},n_{k+2}}=L_{k+2}+R_{k+2}=L^{*}_{k+1}+R^{*}_{k+1} $ and $\underline\alpha^*_{k+1}=\underline\alpha_{k+1}+L_{k+2}+R_{k+2}, $ then
\begin{equation}\label{447}
\eta^*_{\sigma,0}+\eta^*_{\sigma,n_{k+1}}=L^{*}_{k+1}+R^{*}_{k+1}\le \underline\alpha^*_{k+1} \le \bar\alpha^*_{k+1}.
\end{equation}

Since  $n^{*}_{k}=n_{k}$ and $\delta^{*}_{k}=\delta_{k}-L_{k+1}-R_{k+1}$ for any $k\ge 1$, we have

\begin{equation}
\liminf\limits_{k \to\infty}  \frac{\log n_1n_2\cdots n_k}{-\log(\delta_k-L_{k+1}-R_{k+1})}=\liminf\limits_{k \to\infty}  \frac{\log n^{*}_1n^{*}_2\cdots n^{*}_k}{-\log\delta^*_k}.\nonumber
\end{equation}
Then if we want  to get (\ref{311}), we only need to prove
\begin{equation*}
\dim_{H}E=\liminf\limits_{k \to\infty}  \frac{\log n^{*}_1n^{*}_2\cdots n^{*}_k}{-\log\delta^*_k}=\liminf\limits_{k \to\infty}  \frac{\log n_1n_2\cdots n_k}{-\log\delta^*_k}.
\end{equation*}

\bigskip

\section{The Hausdorff dimensions of the homogeneous Moran sets}
Let $E\in \mathcal{M}(I_{0},\left\{n_{k}\right\},\left\{c_{k}\right\})$  be a homogeneous Moran set which satisfies the conditions of Theorem \ref{thm} and $s=\liminf\limits_{k \to\infty}  \frac{\log n_1n_2\cdots n_k}{-\log\delta^*_k}$, to prove Theorem \ref{thm}, we need to prove $\dim_{H}E=s$. We divide the proof  into two parts.
\subsection{The estimation of   the upper bound of the Hausdorff dimension}

For any $t>s$, there exists  $\{l_k\}_{k\ge 1}\subset \mathbb{Z}^{+}$ which is strictly monotonically increasing, and there is a positive integer $K$, such that for any $k\ge K$, we have
\begin{equation}
\frac{\log n_1n_2\cdots n_{l_k}}{-\log\delta^*_{l_k}}<t,\nonumber
\end{equation}
which implies that $n_1n_2\cdots n_{l_k}{(\delta^*_{l_k})}^t<1$. Let $\mathcal{A}$ be the collection of all  $l_k$-order first reconstructed basic intervals of $E$, which means $\mathcal{A}=\{I_{\sigma}^{*}:\sigma \in D_{l_k}\}$, then $\mathcal{A}$ is a $\delta^*_{l_k}-$covering of $E$ and $\# \mathcal{A}=n_1n_2\cdots n_{l_k}$, where $\# \mathcal{A}$ denotes the cardinality of $\# \mathcal{A}$. Thus, we obtain that
\begin{equation}
\mathcal{H}^t(E)=\lim_{k \to\infty}\mathcal{H}^{t}_{\delta^*_{l_k}}(E)\le \lim_{k \to\infty} n_1n_2\cdots n_{l_k}{(\delta^*_{l_k})}^t\le 1,\nonumber
\end{equation}
which yields $\dim_HE\le t$. Since $t>s$ is arbitrary, we have $\dim_{H}E\le s$.
\subsection{The estimation of  the lower bound of the Hausdorff dimension}

It is obvious that $\dim_{H}E\ge s$ if $s=0$, without loss of generality, we assume $s>0$, then for any $0<t<s$,  there exists $k_0 \in \mathbb{Z}^{+}$ such that for any $k\ge k_0$,
\begin{equation}
\frac{\log n_1n_2\cdots n_k}{-\log\delta^*_k}>t,\nonumber
\end{equation}
which implies
\begin{equation}\label{51}
n_1n_2\cdots n_k{(\delta^*_{k})}^t>1.
\end{equation}

Let $\mu$ be a mass distribution on $E$ such that for each $k$-order first reconstructed basic interval of $E$, denoted by $I^*$, we have $\mu(I^*)=(n_1n_2\cdots n_k)^{-1}$.

Suppose that $U\subset \mathbb{R}$ is a set with $0<\left | U \right | <\delta^*_{k_0}$ and $k\ge k_0$ is an integer such that $\delta^*_{k+1}\le \left | U \right | <\delta^*_k$. Then the number of the $k$-order first reconstructed  basic intervals of $E$ which intersect $U$ is at most 2. Next we use the following 3 lemmas to estimate  $\mu(U)$.

\begin{lem}\label{l3}
If condition {\rm(A)} of Theorem {\rm\ref{thm}} holds,
which implies that there exists $\omega_1>0$ such that $\bar{\alpha}_{k}\le \omega_1\underline{\alpha}_{k}$ for any $k\ge 1$, then
$$\mu(U)\le 32\omega_1\left | U \right |^t .$$
\end{lem}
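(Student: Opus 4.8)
The goal is to bound $\mu(U)$ from above by $32\omega_1|U|^t$ when $U$ is an interval with $\delta^*_{k+1}\le |U|<\delta^*_{k}$ and condition (A) holds at level $k+1$. The basic strategy is standard for Moran-type sets: since $U$ meets at most $2$ of the $k$-order reconstructed basic intervals, it suffices to estimate the $\mu$-mass that $U$ can pick up inside a single $k$-order interval $I^*_\sigma$ and then multiply by $2$. Inside $I^*_\sigma$ there are $n_{k+1}$ children $I^*_{\sigma*1},\dots,I^*_{\sigma*n_{k+1}}$, each of length $\delta^*_{k+1}$, each carrying mass $(n_1\cdots n_{k+1})^{-1}=\mu(I^*_\sigma)/n_{k+1}$, and they are separated by the reconstructed gaps $\eta^*_{\sigma,l}$, each of which is at least $\underline\alpha^*_{k+1}\ge\underline\alpha_{k+1}$. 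So the plan is: let $m$ be the number of children of $I^*_\sigma$ that $U$ intersects; then $\mu(U\cap I^*_\sigma)\le m\,(n_1\cdots n_{k+1})^{-1}$, and I must show $m\,(n_1\cdots n_{k+1})^{-1}\le 16\omega_1|U|^t$, whence $\mu(U)\le 2\cdot 16\omega_1|U|^t=32\omega_1|U|^t$.

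First I would dispose of the easy case $m\le 1$: then $\mu(U\cap I^*_\sigma)\le (n_1\cdots n_{k+1})^{-1}\le (n_1\cdots n_k)^{-1}(\delta^*_{k+1})^t/(\delta^*_{k+1})^t$; using $(\delta^*_{k+1})$-vs-$|U|$ and the inequality $n_1\cdots n_k (\delta^*_k)^t>1$ from (5.1) — applied at index $k$ — gives a bound $\le(\delta^*_k)^t\le\cdots$; here one has to be a little careful since we only know $n_1\cdots n_j(\delta^*_j)^t>1$ for $j\ge k_0$, and $|U|<\delta^*_k$ so $|U|^t<(\delta^*_k)^t<1/(n_1\cdots n_k)$ — wait, that is the wrong direction. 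The correct move in the $m\le 1$ case is $\mu(U\cap I^*_\sigma)\le(n_1\cdots n_{k+1})^{-1}<(\delta^*_{k+1})^t\le|U|^t$, using (5.1) at index $k+1$ (legitimate since $k+1>k\ge k_0$). That handles $m\le1$ cleanly. So the real work is when $m\ge2$, i.e. $U$ straddles at least one full gap of $I^*_\sigma$.

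When $m\ge 2$, $U$ contains at least $m-1$ consecutive gaps of level $k+1$ inside $I^*_\sigma$, hence $|U|\ge(m-1)\underline\alpha_{k+1}$, and also $U$ contains at least $m-2$ full children, so $|U|\ge(m-2)\delta^*_{k+1}$; roughly $|U|\gtrsim m\cdot\max(\underline\alpha_{k+1},\delta^*_{k+1})$ up to the small-index correction. On the other hand, since $I^*_\sigma$ has length $\delta^*_k=e_{k+1}+n_{k+1}c_1\cdots c_{k+1}$ and is a union of $n_{k+1}$ children plus the gaps, condition (A) — $\bar\alpha_{k+1}\le\omega_1\underline\alpha_{k+1}$ — lets me compare the total gap length $e^*_{k+1}=\sum_{l=1}^{n_{k+1}-1}\eta^*_{\sigma,l}$ with $(n_{k+1}-1)\underline\alpha^*_{k+1}$ from above via $\omega_1$, so that $\delta^*_k\le n_{k+1}\delta^*_{k+1}+(n_{k+1}-1)\omega_1\underline\alpha_{k+1}\le n_{k+1}\omega_1(\delta^*_{k+1}+\underline\alpha_{k+1})$ (absorbing constants, using $\omega_1\ge1$). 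Combining, $|U|\ge \frac{m-2}{2}\max(\underline\alpha_{k+1},\delta^*_{k+1})\ge\frac{m-2}{2}\cdot\frac{\delta^*_k}{2n_{k+1}\omega_1}$, which rearranges to $m\le \frac{4n_{k+1}\omega_1|U|}{\delta^*_k}+2\le \frac{8n_{k+1}\omega_1|U|}{\delta^*_k}$ (since $|U|\ge\delta^*_{k+1}$ and a crude bound on the "$+2$" term). Then
$$\mu(U\cap I^*_\sigma)\le \frac{m}{n_1\cdots n_{k+1}}\le \frac{8\omega_1|U|}{n_1\cdots n_k\,\delta^*_k}\le 8\omega_1|U|\,(\delta^*_k)^{t-1}\le 8\omega_1|U|^t,$$
using (5.1) at index $k$ in the form $(n_1\cdots n_k)^{-1}\le(\delta^*_k)^t$ wait — (5.1) gives $n_1\cdots n_k(\delta^*_k)^t>1$, i.e. $(n_1\cdots n_k)^{-1}<(\delta^*_k)^t$, which is exactly what is needed; and then $|U|(\delta^*_k)^{t-1}=|U|^t(\delta^*_k)^{t-1}|U|^{1-t}\le|U|^t$ since $|U|<\delta^*_k$ and $t-1<0$. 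Multiplying by the factor $2$ from "at most two $k$-order intervals" gives $\mu(U)\le16\omega_1|U|^t\le32\omega_1|U|^t$, with room to spare absorbing the crude constants. By Lemma \ref{l1}(i) this yields the lower bound for $\dim_H E$ once the analogous lemmas for (B) and (C) are in place.

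The main obstacle is keeping the constants honest through the chain of crude estimates — in particular the boundary-index corrections (the "$m-1$", "$m-2$" bookkeeping for the partial children at the two ends of $U$, and the possibility that $U$ spills slightly outside $I^*_\sigma$ into an adjacent $k$-order interval, which is precisely why the factor $2$ and the generous constant $32$ appear) and making sure condition (A) is invoked correctly to pass from $\bar\alpha^*_{k+1}$ back to $\underline\alpha_{k+1}$. None of this is deep, but the inequality chaining must be done carefully so that the final constant is genuinely $\le 32\omega_1$.
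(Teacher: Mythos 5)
Your overall route is sound and, at its core, runs on the same engine as the paper's proof: the comparison $\delta^*_k\le n_{k+1}\delta^*_{k+1}+(n_{k+1}-1)\bar\alpha^*_{k+1}\lesssim \omega_1 n_{k+1}\max(\delta^*_{k+1},\underline\alpha^*_{k+1})$ coming from condition (A), together with (5.1) and $|U|<\delta^*_k$. The packaging differs: the paper splits into the cases $\delta^*_{k+1}>\underline\alpha^*_{k+1}$ and $\delta^*_{k+1}\le\underline\alpha^*_{k+1}$, bounds the number of $(k+1)$-order intervals meeting $U$ by $\min\{2n_{k+1},\,4|U|/\delta^*_{k+1}\}$ (resp. $4|U|/\underline\alpha^*_{k+1}$), and uses the interpolation $\min\{a,b\}\le a^t b^{1-t}$, whereas you count the intersected children $m$, bound $m$ linearly in $|U|$, and convert via $(n_1\cdots n_k)^{-1}<(\delta^*_k)^t$ and $|U|<\delta^*_k$; your treatment of $m\le 1$ matches the paper's subcase (b) exactly. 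Your version avoids the interpolation trick and unifies the paper's two cases through the $\max$, which is a mild streamlining; both give a constant well under $32\omega_1$.

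Two steps, as literally written, are wrong and need the repairs that the paper builds in. First, $\delta^*_k\le n_{k+1}\delta^*_{k+1}+(n_{k+1}-1)\omega_1\underline\alpha_{k+1}$ is false in general because the reconstructed gaps satisfy $\eta^*_{\sigma,l}\le\bar\alpha_{k+1}+L_{k+2}+R_{k+2}$, and the term $L_{k+2}+R_{k+2}$ cannot be dropped; the correct statement is $\eta^*_{\sigma,l}\le\bar\alpha^*_{k+1}\le\omega_1\underline\alpha^*_{k+1}$ (the paper's (5.2), valid precisely because $\omega_1\ge1$), and correspondingly your lower bound on $|U|$ should use $\underline\alpha^*_{k+1}$ (legitimate, since the gaps $U$ swallows are reconstructed gaps). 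Second, the absorption $m\le 4n_{k+1}\omega_1|U|/\delta^*_k+2\le 8n_{k+1}\omega_1|U|/\delta^*_k$ is not justified by $|U|\ge\delta^*_{k+1}$ alone: that would require $\delta^*_k\le 2\omega_1 n_{k+1}\delta^*_{k+1}$, which fails when the gaps dominate the children (the paper's Case~2). The fix is available inside your own case: $m\ge 2$ forces $U$ to contain a full reconstructed gap, hence $|U|\ge\underline\alpha^*_{k+1}$, and together with $|U|\ge\delta^*_{k+1}$ this gives $|U|\ge\max(\delta^*_{k+1},\underline\alpha^*_{k+1})\ge\delta^*_k/(2\omega_1 n_{k+1})$, which is exactly the inequality needed to absorb the $+2$. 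With those two repairs the chain closes and yields $\mu(U)\le 16\omega_1|U|^t\le 32\omega_1|U|^t$ as claimed.
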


\begin{proof}
Let $k\ge k_0$ be the integer such that $\delta^*_{k+1}\le \left | U \right | <\delta^*_k$. Since $\bar{\alpha}_{k}\ge \underline{\alpha}_{k}$, we have $\omega_1\ge 1$. Then by (\ref{444}) and (\ref{445}), we have
\begin{equation}\label{52}
\bar\alpha^*_{k+1}=\bar\alpha_{k+1}+L_{k+2}+R_{k+2}\\
\le \omega_1\underline\alpha_{k+1}+L_{k+2}+R_{k+2}\le \omega _1\underline\alpha^*_{k+1}.
\end{equation}

Next, we distinguish the proof into two cases.

\textbf{Case 1:} $\delta^*_{k+1}>\underline\alpha^*_{k+1}$.
In this case, for any $\sigma \in D_k$ , since $n_{k+1}\ge 2$ and $\omega_1\ge 1$, by (\ref{aa}) and (\ref{52}), we have
\begin{equation}\label{53}
\begin{aligned}
 \delta^*_k&=\sum_{l=1}^{n_{k+1}-1} \eta_{\sigma,l}+n_{k+1}c_1c_2\cdots c_{k+1}\\
&=\sum_{l=1}^{n_{k+1}-1} \eta_{\sigma,l}+n_{k+1}(\delta^*_{k+1}+L_{k+2}+R_{k+2})\\
&\le \sum_{l=1}^{n_{k+1}-1} \eta_{\sigma,l}+2(n_{k+1}-1)(\delta^*_{k+1}+L_{k+2}+R_{k+2})\\
&\le 2\sum_{l=1}^{n_{k+1}-1}(\eta_{\sigma,l}+\delta^*_{k+1}+L_{k+2}+R_{k+2})\\
&= 2\sum_{l=1}^{n_{k+1}-1}(\eta^*_{\sigma,l}+\delta^*_{k+1})\\
&\le 2\sum_{l=1}^{n_{k+1}-1}(\omega_1\underline{\alpha}^*_{k+1} +\delta^*_{k+1})\\
&\le 4 \omega_1n_{k+1}\delta^*_{k+1}.
\end{aligned}
\end{equation}

Since the number of the $k$-order first reconstructed  basic intervals of $E$ which intersect $U$ is at most 2, the number of the $(k+1)$-order first reconstructed basic intervals of $E$ which intersect $U$ is at most $2n_{k+1}$. On the other hand, the number of the $(k+1)$-order first reconstructed basic intervals of $E$ which intersect $U$ is at most $2(\frac{\left | U \right |}{\delta^*_{k+1}}+1)\le \frac{4\left | U \right |}{\delta^*_{k+1}}$. Notice that $k\ge k_{0}$ and $\omega_1\ge 1$, hence by (\ref{51}) and (\ref{53}), we obtain that
\begin{equation}\label{54}
\begin{aligned}
 \mu(U)&\le \frac{1}{n_1n_2\cdots n_{k+1}}\min\{\frac{4\left | U \right |}{\delta^*_{k+1}},2n_{k+1}\}\\
       &\le \frac{1}{n_1n_2\cdots n_{k+1}}(\frac{4\left | U \right |}{\delta^*_{k+1}})^t(2n_{k+1})^{1-t}\\
&\le \frac{8}{n_1n_2\cdots n_k(n_{k+1}\delta^*_{k+1})^t} \left | U \right |^t\\
&\le (4\omega_1 )^t8\left | U \right |^t \frac{1}{n_1n_2\cdots n_k(\delta^*_k)^t}\\
&\le (4\omega_1 )^t8\left | U \right |^t\\
&\le 32\omega_1\left | U \right |^t.
\end{aligned}
\end{equation}

\textbf{Case 2:} $\delta^*_{k+1}\le \underline\alpha^*_{k+1}$.
In this case, by the similar proof of (\ref{53}), we have the following inequality:
\begin{equation}\label{55}
\delta^*_k\le 4\omega_1n_{k+1}\underline\alpha^*_{k+1}.
\end{equation}

\textbf{(a)} If $\left | U \right |\ge \underline\alpha^*_{k+1}$, then the number of the $(k+1)$-order first reconstructed basic intervals of $E$ which intersect $U$ is at most $2(\frac{\left | U \right |}{\underline\alpha^*_{k+1}}+1)\le \frac{4\left | U \right |}{\underline\alpha^*_{k+1}}$. By the similar proof of (\ref{54})(replace $\delta^*_{k+1}$ by $\underline\alpha^*_{k+1}$),  we have
\begin{equation}\label{56}
\mu(U)\le 32\omega_1\left | U \right |^t.
\end{equation}

\textbf{(b)} If $\left | U \right |< \underline\alpha^*_{k+1}$,  then the number of the $(k+1)$-order reconstructed basic intervals of $E$ which intersect $U$ is at most 2. Notice that $k\ge k_{0}$ and $\omega_{1}\ge 1$, then by (\ref{51}), we have
\begin{equation}\label{57}
\mu(U)\le \frac{2}{n_1n_2\cdots n_{k+1}}=\frac{2}{n_1n_2\cdots n_{k+1}(\delta^*_{k+1})^t}(\delta^*_{k+1})^t \le 2\left | U \right |^t\le 32\omega_{1}\left | U \right |^t.
\end{equation}

   Combining (\ref{54}), (\ref{56}) and (\ref{57}), we prove Lemma \ref{l3}.

\end{proof}
\begin{lem}\label{l4}
If condition \rm{(B)} of Theorem \rm{\ref{thm}} holds,
which implies that there exists $\omega_2>0$, such that $\bar{\alpha}_{k}\le \omega_2\cdot c_1c_2\cdots c_{k}$ for any $k\ge 1$,  then
\begin{equation*}
\mu(U)\le 32(4\omega_2+1)\left | U \right |^t.
\end{equation*}
\end{lem}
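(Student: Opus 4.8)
The plan is to mimic the structure of the proof of Lemma \ref{l3}, replacing the role of the parameter $\underline\alpha_{k+1}$ (which governed the spacing in case (A)) by the natural length scale $c_1c_2\cdots c_{k+1} = \delta_{k+1}$ that appears in condition (B). The first step is to recast condition (B) in terms of the reconstructed data: using $\bar\alpha^*_{k+1} = \bar\alpha_{k+1} + L_{k+2} + R_{k+2}$ together with $L_{k+2} + R_{k+2} \le \bar\alpha^*_{k+1}$ and $\bar\alpha_{k+1}\le \omega_2 c_1\cdots c_{k+1} = \omega_2\delta_{k+1}$, and noting $\delta_{k+1} = \delta^*_{k+1} + L_{k+2} + R_{k+2}$, I would derive a bound of the shape $\bar\alpha^*_{k+1} \le (4\omega_2 + C)\,\delta^*_{k+1}$ for a small absolute constant $C$ (the exact constant is where the $(4\omega_2+1)$ in the statement comes from, and is a routine bookkeeping calculation).

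The second step estimates $\delta^*_k$ from above in terms of $n_{k+1}\delta^*_{k+1}$. As in (5.3), write $\delta^*_k = \sum_{l=1}^{n_{k+1}-1}\eta_{\sigma,l} + n_{k+1}(\delta^*_{k+1} + L_{k+2}+R_{k+2})$ and group the gaps into the reconstructed gaps $\eta^*_{\sigma,l} = \eta_{\sigma,l} + L_{k+2} + R_{k+2}$, each of which is now $\le \bar\alpha^*_{k+1} \le (4\omega_2+C)\delta^*_{k+1}$ by Step 1. Summing over the $n_{k+1}-1$ reconstructed gaps and the $n_{k+1}$ subintervals of length $\delta^*_{k+1}$ gives $\delta^*_k \le C'(\omega_2+1)\, n_{k+1}\delta^*_{k+1}$ with $C'$ absolute. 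Note that here — unlike Lemma \ref{l3} — there is no case split on whether $\delta^*_{k+1}$ exceeds $\underline\alpha^*_{k+1}$, because condition (B) directly controls $\bar\alpha^*_{k+1}$ by $\delta^*_{k+1}$; this is what makes this lemma somewhat cleaner than the previous one.

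The third step is the counting argument, identical to (5.4): the number of $(k+1)$-order reconstructed basic intervals meeting $U$ is at most $\min\{2n_{k+1},\, 4|U|/\delta^*_{k+1}\}$, so interpolating with exponent $t$ and $1-t$,
\[
\mu(U) \le \frac{1}{n_1\cdots n_{k+1}}\Bigl(\frac{4|U|}{\delta^*_{k+1}}\Bigr)^t (2n_{k+1})^{1-t}
\le \frac{8\,|U|^t}{n_1\cdots n_k\,(n_{k+1}\delta^*_{k+1})^t},
\]
and then substituting the Step 2 bound $n_{k+1}\delta^*_{k+1} \ge \delta^*_k / (C'(\omega_2+1))$ and using (5.1), namely $n_1\cdots n_k (\delta^*_k)^t > 1$, collapses the right-hand side to a constant multiple of $|U|^t$. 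Tracking the constants through this chain yields the claimed $32(4\omega_2+1)$.

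The main obstacle is purely the constant bookkeeping in Step 1: one must be careful that $\delta_{k+1}$, $\delta^*_{k+1}$, $\bar\alpha_{k+1}$ and $\bar\alpha^*_{k+1}$ differ by the additive terms $L_{k+2}+R_{k+2}$, and that these additive terms are themselves bounded by $\underline\alpha^*_{k+1} \le \bar\alpha^*_{k+1}$, so that they can be absorbed without circularity. Once the inequality $\bar\alpha^*_{k+1} \le (4\omega_2+1)\delta^*_{k+1}$ (or a comparable bound) is established, the rest is a verbatim repeat of the estimates already carried out in Lemma \ref{l3}. I expect no conceptual difficulty beyond this.
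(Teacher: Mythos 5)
Your Step 1 is not derivable, and the claim that no case split on $\delta^*_{k+1}$ versus $\underline\alpha^*_{k+1}$ is needed is exactly where the proposal breaks down. Condition (B) bounds $\bar\alpha_{k+1}$ by $\omega_2\, c_1\cdots c_{k+1}=\omega_2(\delta^*_{k+1}+L_{k+2}+R_{k+2})$, not by $\omega_2\delta^*_{k+1}$, and the only a priori control on the additive term is $L_{k+2}+R_{k+2}\le\underline\alpha^*_{k+1}\le\bar\alpha^*_{k+1}$. Feeding that into $\bar\alpha^*_{k+1}=\bar\alpha_{k+1}+L_{k+2}+R_{k+2}\le\omega_2\delta^*_{k+1}+(\omega_2+1)(L_{k+2}+R_{k+2})$ puts a coefficient $\omega_2+1\ge 1$ in front of $\bar\alpha^*_{k+1}$ on the right-hand side, so the term cannot be absorbed; and there is no inequality of the form $L_{k+2}+R_{k+2}\le C(\omega_2)\,\delta^*_{k+1}$, because condition (B) at level $k+1$ says nothing about the end gaps at level $k+2$. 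Concretely, if $L_{k+2}+R_{k+2}=0.99\,c_1\cdots c_{k+1}$, then $\delta^*_{k+1}=0.01\,c_1\cdots c_{k+1}$, and the ratios $\bar\alpha^*_{k+1}/\delta^*_{k+1}$ and $\delta^*_k/(n_{k+1}\delta^*_{k+1})$ are of order $100$, independent of $\omega_2$, and can be made arbitrarily large; hence your Step 2 bound $\delta^*_k\le C'(\omega_2+1)\,n_{k+1}\delta^*_{k+1}$ fails and the constant produced in Step 3 is unbounded.

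This is precisely why the paper keeps the dichotomy of Lemma \ref{l3}. When $\delta^*_{k+1}>\underline\alpha^*_{k+1}$, one indeed has $L_{k+2}+R_{k+2}\le\underline\alpha^*_{k+1}<\delta^*_{k+1}$, your absorption is legitimate, and the paper obtains $\eta^*_{\sigma,l}\le(2\omega_2+1)\delta^*_{k+1}$, hence $\delta^*_k\le 4(\omega_2+1)n_{k+1}\delta^*_{k+1}$ and the counting argument of (5.4) at scale $\delta^*_{k+1}$. When $\delta^*_{k+1}\le\underline\alpha^*_{k+1}$, the correct move is not to compare $\bar\alpha^*_{k+1}$ with $\delta^*_{k+1}$ at all: from $2(\underline\alpha_{k+1}+L_{k+2}+R_{k+2})\ge c_1\cdots c_{k+1}$ either $\underline\alpha_{k+1}\ge\frac14 c_1\cdots c_{k+1}$ or $L_{k+2}+R_{k+2}\ge\frac14 c_1\cdots c_{k+1}$, and in both subcases one gets $\bar\alpha^*_{k+1}\le(4\omega_2+1)\underline\alpha^*_{k+1}$, i.e.\ the hypothesis of Lemma \ref{l3} with $\omega_1=4\omega_2+1$, whose counting runs at scale $\underline\alpha^*_{k+1}$ (or uses the trivial count $2$ when $\left|U\right|<\underline\alpha^*_{k+1}$) and yields $\mu(U)\le 32(4\omega_2+1)\left|U\right|^t$. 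That reduction is also where the stated constant $32(4\omega_2+1)$ comes from. So the missing ingredient in your proposal is the second case and its reduction to Lemma \ref{l3}; without it the argument fails on sets whose $(k+2)$-order end gaps dominate $\delta^*_{k+1}$.
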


\begin{proof}
Let $k\ge k_0$ be the integer such that $\delta^*_{k+1}\le \left | U \right | <\delta^*_k$, we also distinguish the proof into two cases.

\textbf{Case 1:}  $\delta^*_{k+1}> \underline\alpha^*_{k+1}$. In this case,  by (\ref{445}), we have  $L_{k+2}+R_{k+2}<\delta^*_{k+1}$, then by (\ref{aa}), we have for any $\sigma \in D_k,  1\le l \le n_{k+1}-1$,
\begin{equation}
\begin{aligned}
\eta^*_{\sigma,l} & = \eta_{\sigma,l}+\eta_{\sigma*l,n_{k+2}}+\eta_{\sigma*(l+1),0}\\
&\le \bar{\alpha}_{k+1}+R_{k+2}+L_{k+2}   \\
&\le \omega_2c_1c_2\cdots c_{k+1}+\delta^*_{k+1} \\
&= \omega_2(L_{k+2}+\delta^*_{k+1}+R_{k+2}) +\delta^*_{k+1}\\
&\le (2\omega_2+1)\delta^*_{k+1}.\nonumber
\end{aligned}
\end{equation}
Then by the similar proof of (\ref{53}), we obtain
\begin{equation}
\delta^*_k\le 4(\omega_2+1)n_{k+1}\delta^*_{k+1}.\nonumber
\end{equation}
By the similar proof of (\ref{54}), notice that $\omega_2+1> 1$, we have $$\mu(U)\le 32(\omega_2+1)\left | U \right |^t\le 32(4\omega_2+1)\left | U \right |^t.$$

\textbf{Case 2:}  $\delta^*_{k+1}\le \underline\alpha^*_{k+1}$. In this case, by (\ref{445}), we have $\underline{\alpha}_{k+1}+L_{k+2}+R_{k+2}\ge \delta^*_{k+1}$. Then
\begin{equation}
2(\underline{\alpha}_{k+1}+L_{k+2}+R_{k+2})\ge \delta^*_{k+1}+L_{k+2}+R_{k+2}=c_1c_2\cdots c_{k+1}.\nonumber
\end{equation}
Therefore
$\underline{\alpha}_{k+1}\ge \frac{1}{4}c_1c_2\cdots c_{k+1}$ or $L_{k+2}+R_{k+2}\ge \frac{1}{4}c_1c_2\cdots c_{k+1}$.

\textbf{(a)} If $\underline{\alpha}_{k+1}\ge \frac{1}{4}c_1c_2\cdots c_{k+1}$,  then by (\ref{aa}) and (\ref{445}), we have for any $\sigma \in D_k,  1\le l \le n_{k+1}-1$,
\begin{equation}
\begin{aligned}
\eta^*_{\sigma,l} & = \eta_{\sigma,l}+\eta_{\sigma*l,n_{k+2}}+\eta_{\sigma*(l+1),0}\\
&\le \bar{\alpha}_{k+1}+R_{k+2}+L_{k+2}   \\
&\le \omega_2c_1c_2\cdots c_{k+1}+R_{k+2}+L_{k+2}\\
&\le 4\omega_2\underline{\alpha}_{k+1}+R_{k+2}+L_{k+2}\\
&\le (4\omega_2+1)(\underline{\alpha}_{k+1}+L_{k+2}+R_{k+2})\\
&=(4\omega_2+1)\underline{\alpha}^*_{k+1}.\nonumber
\end{aligned}
\end{equation}
Then we have $\bar{\alpha}^*_{k+1}\le (4\omega_2+1)\underline{\alpha}^*_{k+1}$, thus by Lemma \ref{l3}, we have $$\mu(U)\le 32(4\omega_2+1)\left | U \right |^t.$$

\textbf{(b)} If $L_{k+2}+R_{k+2}\ge \frac{1}{4}c_1c_2\cdots c_{k+1}$,  then by (\ref{aa}) and (\ref{445}), we have for any $\sigma \in D_k,  1\le l \le n_{k+1}-1$,
\begin{equation}
\begin{aligned}
\eta^*_{\sigma,l} & = \eta_{\sigma,l}+\eta_{\sigma*l,n_{k+2}}+\eta_{\sigma*(l+1),0}\\
&\le \bar{\alpha}_{k+1}+R_{k+2}+L_{k+2}   \\
&\le \omega_2c_1c_2\cdots c_{k+1}+R_{k+2}+L_{k+2}\\
&\le 4\omega_2(R_{k+2}+L_{k+2})+R_{k+2}+L_{k+2}\\
&\le (4\omega_2+1)(L_{k+2}+R_{k+2})\\
&\le (4\omega_2+1)\underline{\alpha}^*_{k+1}.\nonumber
\end{aligned}
\end{equation}
Then we also have $\bar{\alpha}^*_{k+1}\le (4\omega_2+1)\underline{\alpha}^*_{k+1}$ and $\mu(U)\le 32(4\omega_2+1)\left | U \right |^t.$
\end{proof}

\begin{lem}\label{l5}
If condition \rm{(C)} of Theorem \rm{\ref{thm}} holds,
which implies that there exists $\omega_3>0$, such that $n_{k}\underline\alpha_{k}\ge \omega_3\cdot c_1c_2\cdots c_{k-1}$ for any $k\ge 1$, then
\begin{equation}
\mu(U)\le 8\max\{1,\omega_3^{-1}\}\left | U \right |^t.
\end{equation}

\end{lem}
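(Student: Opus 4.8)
The plan is to mirror the structure of the proofs of Lemma~\ref{l3} and Lemma~\ref{l4}: show that under condition (C) we always have a comparison of the form $\delta^*_k\le C\,n_{k+1}\gamma$, where $\gamma$ is whichever of $\delta^*_{k+1}$ or $\underline\alpha^*_{k+1}$ controls the spacing at level $k+1$, and then run the same counting-of-intersecting-intervals argument. First I would translate the hypothesis $n_{k+1}\underline\alpha_{k+1}\ge \omega_3\, c_1c_2\cdots c_k=\omega_3\,\delta_k$ into the reconstructed quantities. Since $\delta_k=\delta^*_k+L_{k+1}+R_{k+1}\ge \delta^*_k$ and $\underline\alpha^*_{k+1}=\underline\alpha_{k+1}+L_{k+2}+R_{k+2}\ge\underline\alpha_{k+1}$ by \eqref{42}, the hypothesis immediately gives $n_{k+1}\underline\alpha^*_{k+1}\ge \omega_3\,\delta^*_k$, i.e.
\begin{equation*}
\delta^*_k\le \omega_3^{-1}\,n_{k+1}\,\underline\alpha^*_{k+1}.
\end{equation*}
This is the single inequality that replaces (5.3)/(5.5), and it holds with no case distinction — which is why the constant here is cleaner than in the previous two lemmas.

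Next I would split into the two cases according to how $|U|$ compares with $\underline\alpha^*_{k+1}$, exactly as in Case~2 of Lemma~\ref{l3}. If $|U|\ge\underline\alpha^*_{k+1}$, then the number of $(k+1)$-order reconstructed basic intervals meeting $U$ is at most $2(|U|/\underline\alpha^*_{k+1}+1)\le 4|U|/\underline\alpha^*_{k+1}$, and it is also at most $2n_{k+1}$; interpolating the two bounds with exponents $t$ and $1-t$ and then using $\delta^*_k\le\omega_3^{-1}n_{k+1}\underline\alpha^*_{k+1}$ together with (5.1) gives
\begin{equation*}
\mu(U)\le \frac{1}{n_1\cdots n_{k+1}}\Bigl(\frac{4|U|}{\underline\alpha^*_{k+1}}\Bigr)^t(2n_{k+1})^{1-t}
\le \frac{8\,|U|^t}{n_1\cdots n_k\,(n_{k+1}\underline\alpha^*_{k+1})^t}
\le 8\,\omega_3^{-t}\,|U|^t\,\frac{1}{n_1\cdots n_k\,(\delta^*_k)^t}\le 8\,\omega_3^{-t}\,|U|^t,
\end{equation*}
and since $|U|<\delta^*_{k_0}$ may be taken $<1$ one absorbs $\omega_3^{-t}\le\max\{1,\omega_3^{-1}\}$. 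If instead $|U|<\underline\alpha^*_{k+1}$, then $U$ meets at most two $(k+1)$-order reconstructed basic intervals, and if it meets two then $|U|\ge\underline\alpha^*_{k+1}$ (since consecutive reconstructed basic intervals at level $k+1$ are separated by a gap of size at least $\underline\alpha^*_{k+1}$); hence $U$ meets at most one such interval, $|U|\ge\delta^*_{k+1}$, and by (5.1)
\begin{equation*}
\mu(U)\le \frac{1}{n_1\cdots n_{k+1}}=\frac{(\delta^*_{k+1})^t}{n_1\cdots n_{k+1}(\delta^*_{k+1})^t}\le |U|^t\le 8\max\{1,\omega_3^{-1}\}\,|U|^t,
\end{equation*}
using (5.2) at level $k+1$. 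Combining the two cases yields the claimed bound.

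The only genuinely delicate point is the separation claim used in the second case: I must be sure that any interval $U$ of length strictly less than $\underline\alpha^*_{k+1}$ cannot touch two distinct $(k+1)$-order reconstructed basic intervals. This follows because the reconstruction trims the outer gaps $L,R$ into the interior, so the gap between $I^*_{\sigma*i}$ and $I^*_{\sigma*(i+1)}$ equals $\eta^*_{\sigma,i}\ge\underline\alpha^*_{k+1}$, and a gap between reconstructed basic intervals sitting in different parent intervals is at least as large; I would state this as a short preliminary observation before the case split. Everything else is a routine transcription of the earlier arguments, so this step is the main obstacle only in the sense that it is where one could slip. After the three lemmas, the lower bound $\dim_H E\ge t$ for every $t<s$ follows from the mass distribution principle (Lemma~\ref{l1}(i)), completing the proof of Theorem~\ref{thm1}.
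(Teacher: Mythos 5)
Your proposal follows the paper's proof essentially verbatim in its main line: you translate condition (C) into the reconstructed quantities via $\delta^*_k\le\delta_k$ and $\underline\alpha_{k+1}\le\underline\alpha^*_{k+1}$ to get the single inequality $\delta^*_k\le\omega_3^{-1}n_{k+1}\underline\alpha^*_{k+1}$, and then rerun the counting argument of (5.6)--(5.7) from Lemma \ref{l3}, which is exactly what the paper does (it literally says ``as in the proof of (5.6) and (5.7)'' after deriving the same inequality). Your case (a) computation, including the interpolation $\min\{a,b\}\le a^tb^{1-t}$ and the use of (5.1), reproduces the paper's bound $8\omega_3^{-t}|U|^t\le 8\max\{1,\omega_3^{-1}\}|U|^t$.

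One correction to the point you yourself flag as delicate: your claim that two $(k+1)$-order reconstructed basic intervals lying in \emph{different} $k$-order parents are separated by at least $\underline\alpha^*_{k+1}$ is not justified and is false in general. The cross-parent separation equals $R_{k+2}+L_{k+2}$ plus the gap between the reconstructed parents, and the latter can be as small as $L_{k+1}+R_{k+1}+\underline\alpha_k$, which need not dominate $\underline\alpha_{k+1}$ (e.g.\ adjacent $k$-order intervals touching, $L_{k+1}=R_{k+1}=0$, but positive interior $(k+1)$-order gaps). Fortunately this claim is unnecessary: since $|U|<\delta^*_k$, $U$ meets at most two parents, and within each parent a set of length $<\underline\alpha^*_{k+1}$ meets at most one child, so $U$ meets at most \emph{two} $(k+1)$-order intervals; together with $|U|\ge\delta^*_{k+1}$ (which comes from the standing choice of $k$, not from your separation claim) and (5.1) at level $k+1$ this gives $\mu(U)\le 2|U|^t$, exactly the paper's (5.7), and the extra factor $2$ is absorbed by the constant $8\max\{1,\omega_3^{-1}\}$. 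With that one-line repair your argument coincides with the paper's.
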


\begin{proof}
Let $k\ge k_0$ be the integer such that $\delta^*_{k+1}\le \left | U \right | <\delta^*_k$. Then by (\ref{42}), we have
\begin{equation}
\omega_3\delta^*_k\le \omega_3c_1c_2\cdots c_{k}\le n_{k+1}\underline{\alpha}_{k+1}\le n_{k+1}\underline{\alpha}^*_{k+1},\nonumber
\end{equation}
which implies that $\delta^*_k\le \omega_3^{-1}n_{k+1}\underline{\alpha}^*_{k+1}$.
Then as in the proof of (\ref{56}) and (\ref{57}), we obtain
$\mu(U)\le \max\{8(\omega^{-1}_{3})^{t},2\}\left| U \right|^{t}\le 8\max\{1,\omega_3^{-1}\}\left | U \right |^t$.

\end{proof}

By Lemma \ref{l3}, Lemma \ref{l4}, Lemma \ref{l5} and (1) of Lemma \ref{l1}, we prove that
$\dim_{H}E\ge t$. By the arbitrariness of $0<t<s$, we obtain  $\dim_{H}E\ge s$ and  finish the proof of Theorem \ref{thm}.

\bigskip

\section{The  Quasisymmetric  minimalities of the homogeneous Moran sets}
We begin to prove Theorem \ref{thm2}, we reconstruct the homogeneous Moran sets again.
\subsection{The second reconstruction of the homogeneous Moran sets}
\begin{lem}\label{l6}
	Let $E=E(I_{0},\left\{n_{k}\right\},\left\{c_{k}\right\})$ be a homogeneous Moran set which satisfies the conditions of Theorem {\rm\ref{thm2}}, and $E(I^*_{0},\{n^*_{k}\},\{c^*_{k}\})$ be the first reconstructed form of $E$. Then there is a sequence of closed sets  whose length is decreasing, denoted by $\{T_{m}\}_{m\ge 0}$, such that $E=\cap_{k\ge0}E_{k}=\cap_{k\ge0}E_{k}^{*}=\cap_{m\ge0}T_{m}$. Furthermore, $\{T_{m}\}_{m\ge 0}$ satisfies the following conditions:
\begin{enumerate}
  \item[\textup{(1)}] For any $m\ge0$, we have $T_{m}=\bigcup_{t=1}^{p_m} F_{t}$, where $p_m \in [1,+\infty)\cap \mathbb{Z}^{+}$,  $\{F_{t}\}_{1\le t\le p_m}$ is a sequence of close intervals, which are called the branches of $T_{m}$, and satisfying $\text{int}(F_{i_1})\cap \text{int}(F_{j_1})=\emptyset$ for any $1\le i_1<j_1\le p_m$. Denote $\mathcal{T}_{m}=\{A: A ~~~~\text{is a branch of}~~~~ T_{m}\}$;
  \item[\textup{(2)}] $\left\{E_{k}^{*}\right\}_{k\ge0}$ is a subsequence of $\left\{T_{m}\right\}_{m\ge 0}$, where $T_{m_{k}}=E_{k}^{*}$ for any $k\ge 0$;

  \item[\textup{(3)}] If $E$ satiefies the condition {\rm(A)} of Theorem {\rm\ref{thm2}}, then there exists $M \in \mathbb{Z}^{+}$ with $M>2\omega_1$ such that each branch of $T_{m-1}$ contains at most $M^{2}$ branches of $T_{m}$ for any $m\ge1$; if $E$ satiefies the condition {\rm(B)} of Theorem {\rm\ref{thm2}}, then there exists $M \in \mathbb{Z}^{+}$ with $M>2(\omega_2+1)$ such that each branch of $T_{m-1}$ contains at most $M^{2}$ branches of $T_{m}$ for any $m\ge1$, where $\omega_1$, $\omega_2$ are the constants in Theorem {\rm\ref{thm2}};
		\item[\textup{(4)}]If $E$ satiefies the condition {\rm(A)} of Theorem {\rm\ref{thm2}}, then $\max_{I\in \mathcal{T}_m}\left | I \right |\le 2 \omega_1\min_{I\in \mathcal{T}_m}\left | I \right |$, and if $E$ satiefies the condition {\rm(B)} of Theorem {\rm\ref{thm2}}, then
$\max_{I\in \mathcal{T}_m}\left | I \right |\le 2 (\omega_2+1)\min_{I\in \mathcal{T}_m}\left | I \right |$ for any $m\ge0$.
\end{enumerate}
\end{lem}
\begin{proof}
First, we construct $\{T_{m}\}_{m\ge 0}$.

Let $M=\min\{A_1:A_1>2\omega_1, A_1\in \mathbb{Z}^{+}\}$ if $E$ satisfies the conditon (A)  of Theorem {\rm\ref{thm2}}, and let $M=\min\{A_1:A_1>2(\omega_2+1),A_1\in \mathbb{Z}^{+}\}$ if $E$ satisfies the conditon (B)  of Theorem {\rm\ref{thm2}}, for any $k\ge 1$, $i_{k}\in \mathbb{Z}^{+}$ satisfies following conditions:
\begin{enumerate}
 \item[\textup{(\rmnum{1})}]$i_{k}=1$ when $2\le n_{k}^{*}<M$;
 \item[\textup{(\rmnum{2})}]$i_{k}$ satisfies $M^{i_{k}}\le n_{k}^{*}<M^{i_{k}+1}$ when $n_{k}^{*}\ge M$.
\end{enumerate}
Let $m_{0}=0,\ m_{k}=\sum_{l=1}^{k}i_{l},$ then $m_k=m_{k-1}+i_k.$
	
	For any $k\ge 0$, let $T_{m_{k}}=E_{k}^{*}$  and  $\mathcal{T}_{m_{k}}=\{I_{\omega}^{\ast}:\omega\in D_{k}\}$,
then $T_{m_{k}}$ is the union of all $k$-order first reconstructed basic intervals of $E$.
Next, we construct $T_{m}$ for any $k\ge 1$ and $m_{k-1}<m<m_{k}$.

\begin{enumerate}
		\item[\textup{(1)}] If $M\le n_{k}^{*}< M^{2}$, then $i_{k}=1$ and $m_k=m_{k-1}+1$, there is no integer $m$ which satisfies $m_{k-1}< m<m_k$.
		\item[\textup{(2)}] If $n_{k}^{*}\ge M^{2}$, then $i_{k}\ge 2$, and there exists  $b_{j}\in\left\{0,1,\cdots,M-1\right\}$  for any $j\in\left\{0,1,\cdots,i_{k}-1\right\}$, such that
\begin{equation*}
		n_{k}^{*}=b_{0}+b_{1}M+b_{2}M^{2}+\cdots+b_{i_{k}-1}M^{i_{k}-1}+M^{i_{k}}.
\end{equation*}
	
For any $k\ge1$ and $\sigma\in D_{k-1}$, since $T_{m_{k-1}}=E_{k-1}^{*}$, then $T_{m_{k-1}}$ has $N_{k-1}^{*}$ branches and
$I_{\sigma}^{*}$ contains $n_{k}^{*}$ $k$-order first reconstructed basic intervals of $E$ for any $I_{\sigma}^{*}\in \mathcal{T}_{m_{k-1}}$, which are  $I_{\sigma*1}^{*},\cdots,I_{\sigma*n_{k}^{*}}^{*}$  from left to right.

Next, we construct $T_{m_{k-1}+i}$ for any  $1\le i\le i_k-1$.

For $t$ closed intervals $Q_{1},Q_{2},\cdots,Q_{t}$, let $[Q_{1},Q_{2},\cdots,Q_{t}]$ be the smallest closed interval which contains them.

\begin{enumerate}
\item[\textup{(a)}] For any $I_{\sigma}^{*}\in \mathcal{T}_{m_{k-1}}$,
let $n^*_k=Md_1+b_0=b_0(d_1+1)+(M-b_0)d_1$ where
$d_{1}=b_{1}+b_{2}M+\cdots+b_{i_{k}-1}M^{i_{k}-2}+M^{i_{k}-1}$.
We define some subintervals of $I_{\sigma}^{*}$ as follows,
$$I_{1}^{\sigma,1}=[I^*_{\sigma*1},\cdots,I^*_{\sigma*(d_1+1)}],$$
$$I_{2}^{\sigma,1}=[I^*_{\sigma*(d_1+2)},\cdots,I^*_{\sigma*(2d_1+2)}],$$
$$\cdots$$
$$I_{b_0}^{\sigma,1}=[I^*_{\sigma*\big((b_0-1)(d_1+1)+1\big)} ,\cdots,I^*_{\sigma*\big(b_0(d_1+1)\big)}],$$
$$I_{b_0+1}^{\sigma,1}=[I^*_{\sigma*\big(b_0(d_1+1)+1\big)},\cdots,I^*_{\sigma*\big(b_0(d_1+1)+d_1\big)}],$$
$$I_{b_0+2}^{\sigma,1}=[I^*_{\sigma*\big(b_0(d_1+1)+d_1+1\big)},\cdots,I^*_{\sigma*\big(b_0(d_1+1)+2d_1\big)}],$$
$$\cdots$$
$$I_{M}^{\sigma,1}=[I^*_{\sigma*(n^*_k+1-d_1)},\cdots,I^*_{\sigma*n^*_k}].$$

Then each $I_{1}^{\sigma,1}, \cdots, I_{b_0}^{\sigma,1}$ contains $d_1+1$   $k$-order first reconstructed basic intervals of $E$,
and each $I_{b_0+1}^{\sigma,1}, \cdots, I_{M}^{\sigma,1}$ contains $d_1$  $k$-order first reconstructed basic intervals of $E$.
Let $T_{m_{k-1}+1}=\bigcup_{\sigma \in D_{k-1}}\bigcup_{i=1}^{M}I^{\sigma,1}_i$, and the $M$ closed intervals $I_{1}^{\sigma,1},\cdots,I_{M}^{\sigma,1}$ be the $M$ branches of $T_{m_{k-1}+1}$ in $I_{\sigma}^{*}$ , then each branch of $T_{m_{k-1}}$ contains $M$ branches of $T_{m_{k-1}+1}$.

\item[\textup{(b)}] If $i_{k}=2$, then $m_{k}=m_{k-1}+2$. We have defined $T_{m_{k-1}+1}$ as above, and $T_{m_{k-1}}=E_{k-1}^{*}$, $T_{m_{k}}=E_{k}^{*}$.
    Thus we finish the construction of $T_{m_{k-1}+i}$  for any $1\le i\le i_k-1$.

\item[\textup{(c)}] If $i_{k}\ge3$, we need to construct  $T_{m_{k-1}+2}$. Let $d_2=b_2+b_3M+\cdots +b_{i_k-1}M^{i_k-3}+M^{i_k-2}$, then $d_{1}=Md_{2}+b_{1}$, $n^*_k=M^2d_2+b_1M+b_0=b_0(Md_2+b_1+1)+(M-b_0)(Md_2+b_1)$.

For any $I^{\sigma,1}_{i}\in \mathcal{T}_{m_{k-1}+1}(\sigma\in D_{k-1}, 1\le i\le M)$, we consider the following two cases:

\textbf{(c1):} If $1\le i\le b_{0}$,
then each $I^{\sigma,1}_{i}$ contains $d_1+1$  $k$-order first reconstructed basic intervals of $E$,
where $d_1+1=Md_2+b_1+1=(d_2+1)(b_1+1)+d_2(M-b_1-1)$.
Since $I_{i}^{\sigma,1}=[I^*_{\sigma*\big((i-1)d_1+i\big)}, I^*_{\sigma*\big(i(d_1+1)\big)}]$,  we define
$$I_{i*1}^{\sigma,1}=[I^*_{\sigma*\big((i-1)d_1+i\big)},\cdots,I^*_{\sigma*\big((i-1)d_1+i+d_2\big)}],$$
$$I_{i*2}^{\sigma,1}=[I^*_{\sigma*\big((i-1)d_1+i+d_2+1\big)},\cdots,I^*_{\sigma*\big((i-1)d_1+i+2d_2+1\big)}],$$
$$\cdots$$
$$I_{i*(b_1+1)}^{\sigma,1}=[I^*_{\sigma*\big((i-1)d_1+i+b_1d_2+b_1\big)},\cdots,I^*_{\sigma*\big((i-1)d_1+i+(b_1+1)d_2+b_1\big)}],$$
$$I_{i*(b_1+2)}^{\sigma,1}=[I^*_{\sigma*\big((i-1)d_1+i+(b_1+1)(d_2+1)\big)},\cdots,I^*_{\sigma*\big((i-1)d_1+i+(b_1+1)(d_2+1)+d_2-1\big)}],$$
$$\cdots$$
$$I_{i*M}^{\sigma,1}=[I^*_{\sigma*(id_1+i+1-d_2)},\cdots,I^*_{\sigma*\big(i(d_1+1)\big)}].$$

Then each $I_{i*1}^{\sigma,1}, \cdots, I_{i*(b_1+1)}^{\sigma,1}$ contains $d_2+1$  $k$-order first reconstructed basic intervals of $E$,
and each  $I_{i*(b_1+2)}^{\sigma,1}, \cdots, I_{i*M}^{\sigma,1}$ contains $d_2$  $k$-order first reconstructed basic intervals of $E$.

\textbf{(c2):} If $b_{0}+1\le i\le M$, then
each $I^{\sigma,1}_{i}$ contains $d_1$ $k$-order first reconstructed basic intervals of $E$,
where $d_1=Md_2+b_1=(d_2+1)b_1++d_2(M-b_1)$.
Since $I_{i}^{\sigma,1}=[I^*_{\sigma*\big(b_0(d_1+1)+(i-b_0-1)d_1+1\big)}, I^*_{\sigma*\big(b_0(d_1+1)+(i-b_0)d_1\big)}]$,
we define
$$I_{i*1}^{\sigma,1}=[I^*_{\sigma*\big(b_0(d_1+1)+(i-b_0-1)d_1+1\big)},\cdots,I^*_{\sigma*\big((i-1)d_1+b_0+1+d_2\big)}],$$
$$I_{i*2}^{\sigma,1}=[I^*_{\sigma*\big((i-1)d_1+b_0+d_2+2\big)},\cdots,I^*_{\sigma*\big((i-1)d_1+b_0+2d_2+2\big)}],$$
$$\cdots$$
$$I_{i*b_1}^{\sigma,1}=[I^*_{\sigma*\big((i-1)d_1+b_0+(b_1-1)d_2+b_1\big)},\cdots,I^*_{\sigma*\big((i-1)d_1+b_0+b_1d_2+b_1\big)}],$$
$$I_{i*(b_1+1)}^{\sigma,1}=[I^*_{\sigma*\big((i-1)d_1+b_0+b_1d_2+b_1+1\big)},\cdots,I^*_{\sigma*\big((i-1)d_1+b_0+b_1d_2+b_1+d_2\big)}],$$
$$\cdots$$
$$I_{i*M}^{\sigma,1}=[I^*_{\sigma*(id_1+b_0+1-d_2)},\cdots,I^*_{\sigma*\big(b_0(d_1+1)+(i-b_0)d_1\big)}].$$

Then each $I_{i*1}^{\sigma,1}, \cdots, I_{i*b_1}^{\sigma,1}$ contains $d_2+1$ $k$-order first reconstructed basic intervals of $E$,
and each $I_{i*(b_1+1)}^{\sigma,1}, \cdots, I_{i*M}^{\sigma,1}$ contains $d_2$   $k$-order first reconstructed basic intervals of $E$.

For any $1\le l \le M$ and$(l-1)M+1\le h\le lM$, define $I^{\sigma,2}_{h}=I^{\sigma,1}_{l*\big(h-(l-1)M\big)}$. Let $$T_{m_{k-1}+2}=\bigcup_{\sigma \in D_{k-1}}\bigcup_{i=1}^{M} \bigcup_{j=1}^{M}I^{\sigma,1}_{i*j}=\bigcup_{\sigma \in D_{k-1}}\bigcup_{h=1}^{M^2}I^{\sigma,2}_{h},$$ and let the $M$ closed intervals $I^{\sigma,1}_{i*1},I^{\sigma,1}_{i*2},\cdots, I^{\sigma,1}_{i*M}$  be the $M$ branches of $T_{m_{k-1}+2}$ in $I_{i}^{\sigma,1}$,  then each branch of $T_{m_{k-1}+1}$ contains $M$ branches of $T_{m_{k-1}+2}$.

\item[\textup{(d)}] If $i_{k}=3$, then $m_{k}=m_{k-1}+3$. We have defined $T_{m_{k-1}+1}$, $T_{m_{k-1}+2}$ as above, $T_{m_{k-1}}=E_{k-1}^{*}$, $T_{m_{k}}=E_{k}^{*}$. Then the construction is done.

\item[\textup{(e)}]If $i_k\ge 4$, then we have $m_k=m_{k-1}+i_k$. If we finish the construction of $T_{m_{k-1}+i-1}(3\le i\le i_k-1)$,  then we repeat the method of the construction of $T_{m_{k-1}+i-1}$ from $T_{m_{k-1}+i-2}$ to construct $T_{m_{k-1}+i}$ from $T_{m_{k-1}+i-1}$. Then each branch of $T_{m_{k-1}+j-1}\ (1\le j\le i_{k}-1)$ contains $M$ branches of $T_{m_{k-1}+j}$, which implies that each branch of $T_{m_{k-1}}$ contains $M^{i_{k}-1}$ branches of $T_{m_{k-1}+i_{k}-1}$.
    Notice that  $m_k=m_{k-1}+i_k$ and $T_{m_{k}}=E_{k}^{*}$ for any $k\ge 0$, we  obtain  that each branch of $T_{m_{k-1}}$ contains $n_k^{*}$ branches of $T_{m_{k}}$, then the number of the branches of $T_{m_{k}}$ contained in each branch of $T_{m_{k-1}+i_{k}-1}$ is at most $M^{2}$(If  there is a branch of $T_{m_{k-1}+i_{k}-1}$ containing $M^{'}$ branches of $T_{m_{k}}$ with $M^{'}>M^{2}$, then
    the number of the branches of $T_{m_{k}}$ contained in any branch of $T_{m_{k-1}+i_{k}-1}$
    is $M^{'}$, $M^{'}+1$ or $M^{'}-1$. We obtain that $n_{k}^{*}\ge M^{2}\times M^{i_{k}-1}=M^{i_k+1}$, which is contrary to the fact $ n_k^{*}<M^{i_k+1}$).

\end{enumerate}

    We finish the construction of $\{T_{m}\}_{m\ge 0}$ and prove that the conditions (1)-(3) of Lemma \ref{l6}  hold if the conditions of Theorem \ref{thm2} is satisfied.

Now we consider the relationship of the lengths of the branches.

Since $T_{m_{k}}=E_{k}^{*}$ for any $k\ge 0$, we have $\max_{I\in\mathcal{T}_{m_{k}}}\left|I\right|= \min_{I\in\mathcal{T}_{m_{k}}}\left|I\right|$ for any $k \ge 0$.

For any $k\ge 1$, $m_{k-1}< m <m_k$ and $I\in T_m$, let
$\Psi (I,T_{m_{k}})=\#(\{I'\in T_{m_{k}}: I'\subset I\})$($\#$ denotes the cardinality),  which  means $\Psi (I,T_{m_{k}}) $ is the number of the branches of $T_{m_{k}}$contained in $I$($\Psi (I,T_{m_{k}}) $ is also the number of the  $k$-order first reconstructed basic intervals of $E$ contained in $I$),
then we have \begin{equation}\label{61}\Psi (\max_{I\in\mathcal{T}_{m} }\left | I \right |,T_{m_{k}})\le \Psi (\min_{I\in\mathcal{T}_{m} }\left | I \right |,T_{m_{k}})+1\end{equation} from the above construction.

If  $E$ satisfies the condition (A) of Theorem \ref{thm2}, then for any $k\ge 1$, we have
\begin{equation}
\bar\alpha_k\le  \omega_{1}\underline\alpha_k.\nonumber
\end{equation}
By (\ref{52}), we obtain
\begin{equation}\label{62}
\bar\alpha^*_k\le \omega_{1} \underline\alpha^*_k.
\end{equation}

Combing (\ref{61}),  (\ref{62}) and $M>2\omega_{1}\ge2$, we obtain
\begin{equation}
\begin{aligned}
\max_{I\in\mathcal{T}_{m} }\left | I \right |&\le \Psi (\max_{I\in\mathcal{T}_{m} }\left | I \right |,T_{m_{k}})\delta^*_k+(\Psi (\max_{I\in\mathcal{T}_{m} }\left | I \right |,T_{m_{k}})-1)\bar{\alpha}^*_k\\ &\le (\Psi (\min_{I\in\mathcal{T}_{m} }\left | I \right |,T_{m_{k}})+1)\delta^*_k+\Psi (\min_{I\in\mathcal{T}_{m} }\left | I \right |,T_{m_{k}})\bar{\alpha}^*_k\\
&\le 2\omega_{1}[\Psi (\min_{I\in\mathcal{T}_{m} }\left | I \right |,T_{m_{k}})\delta^*_k+(\Psi (\min_{I\in\mathcal{T}_{m} }\left | I \right |,T_{m_{k}})-1)\underline{\alpha}^*_k]\\
&\le 2\omega_{1} \min_{I\in\mathcal{T}_{m} }\left | I \right |.\nonumber
\end{aligned}
\end{equation}

	\end{enumerate}

If  $E$ satisfies the condition (B) of Theorem \ref{thm2}, then for any $k\ge 1$, we have
\begin{equation}
\bar\alpha_k\le \omega_{2} \delta_k.\nonumber
\end{equation}
By (\ref{444}), \begin{equation}\label{64}
\bar{\alpha}^*_k=\bar{\alpha}_k+L^*_k+R^*_k.
\end{equation}

Combing (\ref{447}), (\ref{61}),  (\ref{64}) and $M>2(\omega_{2}+1)>2$, we obtain
\begin{equation}
\begin{aligned}
\max_{I\in\mathcal{T}_{m} }\left | I \right |&\le \Psi (\max_{I\in\mathcal{T}_{m} }\left | I \right |,T_{m_{k}})\delta^*_k+(\Psi (\max_{I\in\mathcal{T}_{m} }\left | I \right |,T_{m_{k}})-1)\bar{\alpha}^*_k\\&\le\Psi (\max_{I\in\mathcal{T}_{m} }\left | I \right |,T_{m_{k}})\delta^*_k+(\Psi (\min_{I\in\mathcal{T}_{m} }\left | I \right |,T_{m_{k}})(\bar{\alpha}_k+L^*_k+R^*_k)\\&\le \Psi (\max_{I\in\mathcal{T}_{m} }\left | I \right |,T_{m_{k}})\delta^*_k+(\Psi (\min_{I\in\mathcal{T}_{m} }\left | I \right |,T_{m_{k}})(\omega_{2} \delta_k+L^*_k+R^*_k)\\&=\Psi (\max_{I\in\mathcal{T}_{m} }\left | I \right |,T_{m_{k}})\delta^*_k+(\Psi (\min_{I\in\mathcal{T}_{m} }\left | I \right |,T_{m_{k}})(\omega_{2} (\delta^*_k+L^*_k+R^*_k)+L^*_k+R^*_k) \\
&\le (\omega_{2}+1)[(\Psi (\min_{I\in\mathcal{T}_{m} }\left | I \right |,T_{m_{k}})+1)\delta^*_k+\Psi (\min_{I\in\mathcal{T}_{m} }\left | I \right |,T_{m_{k}})(L^*_k+R^*_k)]\\
&\le 2(\omega_{2}+1)[\Psi (\min_{I\in\mathcal{T}_{m} }\left | I \right |,T_{m_{k}})\delta^*_k+(\Psi (\min_{I\in\mathcal{T}_{m} }\left | I \right |,T_{m_{k}})-1)(L^*_k+R^*_k)]\\
&\le 2(\omega_{2}+1)[\Psi (\min_{I\in\mathcal{T}_{m} }\left | I \right |,T_{m_{k}})\delta^*_k+(\Psi (\min_{I\in\mathcal{T}_{m} }\left | I \right |,T_{m_{k}})-1)\underline{\alpha}^*_k]\\
&\le 2(\omega_{2}+1)\min_{I\in\mathcal{T}_{m} }\left | I \right |.\nonumber
\end{aligned}
\end{equation}

Thus we prove that the condition (4) of Lemma \ref{l6}  hold if the conditions of Theorem \ref{thm2} is satisfied.

We complete the constructions of  $\{T_m\}_{m\ge0}$ and the proof of Lemma \ref{l6}.

\end{proof}
\bigskip

\begin{rem}
Without loss of generality, we  assume that $I_{0}^{*}=[0,1]$, then $\delta^*_{0}=1$ and $T_{m_{0}}=E_{0}^{*}=[0,1]$.
\end{rem}
\bigskip
We have the following lemma.

\begin{lem}\label{l7}
	Let $E=E(I_{0},\left\{n_{k}\right\},\left\{c_{k}\right\})$ be a homogeneous Moran set which satisfies the conditions of Theorem \ref{thm2}, $\left\{T_{m}\right\}_{m\ge0}$ be the sequences in Lemma \text{\rm\ref{l6}},
$l(T_{m})$ be the total length of all branches of $T_{m}$.
Then for any $k\ge 1$,
\begin{equation}
l(T_{m_{k}})=N^*_k\delta^*_k.
\end{equation}
Furthermore, if $E$ satiefies the condition {\rm(A)} of Theorem {\rm\ref{thm2}}, then for any $k\ge 1$ and $m_{k-1}<m<m_k$,
\begin{equation}
(1-\frac{2\omega_1}{M})N^*_{k-1}\delta^*_{k-1}\le l(T_m) \le N^*_{k-1}\delta^*_{k-1}.
\end{equation}
If $E$ satiefies the condition {\rm(B)} of Theorem {\rm\ref{thm2}}, then for any $k\ge 1$ and $m_{k-1}<m<m_k$,
\begin{equation}
(1-\frac{2(\omega_2+1)}{M})N^*_{k-1}\delta^*_{k-1}\le l(T_m) \le N^*_{k-1}\delta^*_{k-1}.
\end{equation}

\end{lem}

\begin{proof}
	Since for any $k\ge 1$, $T_{m_{k}}=E_{k}^{*}$, we have $l(T_{m_{k}})=l(E_{k}^{*})=N_{k}^{*}\delta_{k}^{*}$.
Notice that $\{l(T_{m})\}_{m\ge0}$ is decreasing, then
$l(T_{m})\le l(T_{m_{k-1}})=l(E_{k-1}^{*})=N_{k-1}^{*}\delta_{k-1}^{*}$ for any $k\ge 1$ and $m_{k-1}<m<m_{k}$.

    So we only need to prove that if $E$ satiefies condition {\rm(A)} of Theorem {\rm\ref{thm2}}, then
    $(1-\frac{2\omega_1}{M})N^*_{k-1}\delta^*_{k-1}\le l(T_m)$  for any $k\ge 1$ and $m_{k-1}<m<m_{k}$, if $E$ satiefies condition {\rm(B)} of Theorem {\rm\ref{thm2}}, then
    $(1-\frac{2(\omega_2+1)}{M})N_{k-1}^{*}\delta_{k-1}^{*}\le l(T_{m})$ for any $k\ge 1$ and $m_{k-1}<m<m_{k}$.

 By the construction of $\left\{T_{m}\right\}_{m\ge0}$, in order to get $T_{m_{k}-1}$ from $T_{m_{k-1}}$, we should remove a left-closed and right-open interval of length $L^*_k$ and a left-open and  right-closed interval of length  $R^*_k$ from each branch of $T_{m_{k-1}}$, and remove $[\sum_{j=0}^{i_{k}-2}M^{j}(M-1)]N_{k-1}^{*}=(M^{i_{k}-1}-1)N_{k-1}^{*}$ open intervals whose lengths are at most $\overline{\alpha}_{k}^{*}$ from  $E_{k-1}^{*}=T_{m_{k-1}}$.

 If $E$ satiefies the condition {\rm(A)} of Theorem {\rm\ref{thm2}}, notice that $n^*_k\ge 2$, $\omega_1\ge 1$ and
$M^{i_k}\le n^*_k <M^{i_k+1}$, then by ($\ref{447}$) and ($\ref{52}$), we have
\begin{equation}
\begin{aligned}
l(T_{m_k-1})&\ge N^*_{k-1}\delta^*_{k-1}-N^*_{k-1}[(L^*_k+R^*_k)+(M^{i_k-1}-1)\bar{\alpha}^*_k]\\
&\ge N^*_{k-1}\delta^*_{k-1}-M^{i_k-1}N^*_{k-1}\bar{\alpha}^*_k\\
&\ge N^*_{k-1}\delta^*_{k-1}-\frac{n^*_k}{M}N^*_{k-1}\bar{\alpha}^*_k\\
&\ge N^*_{k-1}\delta^*_{k-1}-\frac{2(n^*_k-1)}{M}N^*_{k-1}\bar{\alpha}^*_k\\
&\ge N^*_{k-1}\delta^*_{k-1}-\frac{2\omega_1}{M}N^*_{k-1}(n^*_k-1)\underline{\alpha}^*_k\\
&\ge N^*_{k-1}\delta^*_{k-1}-\frac{2\omega_1}{M}N^*_{k-1}\delta^*_{k-1}\\
&\ge (1-\frac{2\omega_1}{M})N^*_{k-1}\delta^*_{k-1}.\nonumber
\end{aligned}
\end{equation}
Since $\left\{T_{m}\right\}_{m\ge0}$ is a sequence whose length is decreasing,
we obtain
\begin{equation*}
l(T_{m})\ge l(T_{m_k-1})\ge (1-\frac{2\omega_1}{M})N^*_{k-1}\delta^*_{k-1}
\end{equation*}
for any $k\ge 1$ and $m_{k-1}<m<m_{k}$.

If $E$ satiefies the condition {\rm(B)} of Theorem {\rm\ref{thm2}}, notice that
$M^{i_k}\le n^*_k <M^{i_k+1}$ and $1+\omega_2\ge 1$, then by ($\ref{444}$) and ($\ref{447}$), we have
\begin{equation}
\begin{aligned}
l(T_{m_k-1})&\ge N^*_{k-1}\delta^*_{k-1}-N^*_{k-1}[(L^*_k+R^*_k)+(M^{i_k-1}-1)\bar{\alpha}^*_k]\\
&\ge N^*_{k-1}\delta^*_{k-1}-M^{i_k-1}N^*_{k-1}\bar{\alpha}^*_k\\
&= N^*_{k-1}\delta^*_{k-1}-M^{i_k-1}N^*_{k-1}(\bar{\alpha}_k+L^*_k+R^*_k)\\
&\ge N^*_{k-1}\delta^*_{k-1}-M^{i_k-1}N^*_{k-1}(\omega_2\delta_k+L^*_k+R^*_k)\\
&\ge N^*_{k-1}\delta^*_{k-1}-M^{i_k-1}N^*_{k-1}(\omega_2+1)(\delta_k+L^*_k+R^*_k)\\
&\ge N^*_{k-1}\delta^*_{k-1}-\frac{n^*_k}{M}N^*_{k-1}(\omega_2+1)(\delta^*_k+2(L^*_k+R^*_k))\\
&\ge N^*_{k-1}\delta^*_{k-1}-\frac{2(\omega_2+1)}{M}N^*_{k-1}\delta^*_{k-1}\\
&= (1-\frac{2(\omega_2+1)}{M})N^*_{k-1}\delta^*_{k-1},\nonumber
\end{aligned}
\end{equation}
which implies that
\begin{equation*}
l(T_m)\ge  l(T_{m_{k}-1}) \ge(1-\frac{2(\omega_2+1)}{M})N^*_{k-1}\delta^*_{k-1}
\end{equation*}
for any $k\ge 1$ and $m_{k-1}<m<m_{k}$ and we complete the proof of Lemma \ref{l7}.
\end{proof}

\bigskip

\subsection{Some marks and lemmas}

Let $E=E(I_{0},\left\{n_{k}\right\},\left\{c_{k}\right\})$ be a homogeneous Moran set which satisfies the conditions of Theorem \ref{thm2}, $\left\{T_{m}\right\}_{m\ge0}$ be the sequences in Lemma \text{\rm\ref{l6}}, $f$ is a 1-dimensional quasisymmetric mapping.

For any $m\ge0$ and $I\in T_m$, $I-(I\cap T_{m+1})$ consists of some intervals which interiors are disjoint (the most left and right  intervals  are half-open and half-closed intervals or empty sets, others are open intervals), we call them the gaps of $I$. The collection of all the gaps of all the branchs of $T_m$ is denoted by  $\mathcal{G}_m$, which implies that $\mathcal{G}_m=\{$The gaps of $I: I\in \mathcal{T}_m\}$.

For any  $I\in T_m$, we denote $\mathcal{G}(I)=\{L:L\subset I, L\in\mathcal{G}_m\}$.
According to the reconstruction process, for any $I\in T_m$, $I$ contains at most $M^2$ branches of $T_{m+1}$, then $\#(\mathcal{G}(I))\le M^2+1$($\#$ denotes the cardinality).

For any $m\ge 1$ and $I\in T_m$ , denote the branch of $T_{m-1}$ which contain $I$ by $Xa(I)$.

For any  $m\ge1$, $k\ge 1$ and $m_{k-1}< m\le m_{k}$ , denote
$$\Lambda^*(m)=\frac{\max_{I\in \mathcal{T}_m }\left | I \right | }{\min_{I\in \mathcal{T}_{m-1} }\left | I \right |},\quad \Lambda_*(m)=\frac{\min_{I\in \mathcal{T}_m }\left | I \right | }{\max_{I\in \mathcal{T}_{m-1} }\left | I \right |},
 $$

$$\Gamma^*(m)=\frac{\bar{\alpha}^*_k}{\min_{I\in \mathcal{T}_{m-1} }\left | I \right |},\quad
\Gamma_*(m)=\frac{\underline{\alpha}^*_k}{\max_{I\in \mathcal{T}_{m-1} }\left | I \right |},
 $$

$$\beta_m=\max\{\frac{\left | F \right |}{\left | I \right |}, I\in T_m, F\in \mathcal{G}(I)\},$$
$$\Theta_m=\min \{\frac{\sum_{i=1}^{N(I_m)} \left | I_{m,i} \right |}{\left | I_m \right | }:I_m\in T_m\},$$

$$\chi_m=\max\{\frac{\left | I \right | }{\left | Xa(I) \right | }:I\in T_m\},$$
where $N(I_{m})$ is the number of the branches of $T_{m+1}$ contained in $I_{m}$, then $N(I_{m})\le M^{2}$.

We have the following lemmas.
\begin{lem}\label{l8}
For any $m\ge0$, we have $\Theta_m\ge 1-(M^2+1)\beta_m$.
\end{lem}
\begin{proof}
For $I_m\in \mathcal{T}_m$, we have
$\beta_m\ge\frac{\left | F \right | }{\left | I_m \right | } $ for any $F\in \mathcal{G}(I_m)$. Then
\begin{equation*}
\sum_{F\in \mathcal{G}(I_m) } \frac{\left | F \right | }{\left | I_m \right | } \le \sum_{F\in \mathcal{G}(I_m) }\beta_m\le(M^2+1)\beta_m.
\end{equation*}
Which implies that
\begin{equation*}
\frac{\sum_{i=1}^{N(I_m)}\left | I_{m,i} \right | }{\left | I_m \right | }=\frac{\left | I_{m} \right |- \sum_{F\in \mathcal{G}(I_m)} \left | F \right |   }{\left | I_m \right | }\ge1-(M^2+1)\beta_m.
\end{equation*}
By the arbitrariness of $I_m$ and $\tilde{I_m}$, we have
\begin{equation*}
\Theta_m=\min \{\frac{\sum_{i=1}^{N(I_m)} \left | I_{m,i} \right |}{\left | I_m \right | }:I_m\in T_m\}\ge 1-(M^2+1)\beta_m.
\end{equation*}

\end{proof}

\begin{lem}\label{l9}
Suppose $\{w_m\}_{m\ge 0}$ is a sequence of non-negative real numbers, and
\begin{equation*}
\lim_{m \to \infty} \frac{1}{m} \sum_{i=0}^{m-1} w_i=0.
\end{equation*}
Then we have
\begin{equation*}
\lim_{m \to \infty} \frac{V(m,\varepsilon )}{m} =1,
 \end{equation*}
for any $\varepsilon\in (0,1)$, where $V(m,\varepsilon )=\#(\{0\le i\le m-1: w_i<\varepsilon\})$$(\#$ denotes the cardinality$)$.
\end{lem}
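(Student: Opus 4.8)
The plan is to reduce the statement to a one-line Markov-type (Chebyshev-type) inequality, exploiting that the $w_i$ are non-negative. Introduce the complementary counting function
\begin{equation*}
W(m,\varepsilon)=\card(\{0\le i\le m-1: w_i\ge \varepsilon\})=m-V(m,\varepsilon),
\end{equation*}
so that proving $V(m,\varepsilon)/m\to 1$ is equivalent to proving $W(m,\varepsilon)/m\to 0$.

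The key step is the following elementary bound. Since every $w_i\ge 0$, throwing away the indices with $w_i<\varepsilon$ only decreases the partial sum, and on each remaining index we have $w_i\ge\varepsilon$; hence
\begin{equation*}
\sum_{i=0}^{m-1} w_i\ \ge\ \sum_{\{0\le i\le m-1:\, w_i\ge\varepsilon\}} w_i\ \ge\ \varepsilon\, W(m,\varepsilon).
\end{equation*}
Dividing by $\varepsilon m$ (legitimate for all $m\ge 1$ and fixed $\varepsilon>0$) gives
\begin{equation*}
\frac{W(m,\varepsilon)}{m}\ \le\ \frac{1}{\varepsilon}\cdot\frac{1}{m}\sum_{i=0}^{m-1} w_i .
\end{equation*}
By hypothesis $\frac{1}{m}\sum_{i=0}^{m-1} w_i\to 0$, so for fixed $\varepsilon$ the right-hand side tends to $0$ as $m\to\infty$; therefore $W(m,\varepsilon)/m\to 0$, and consequently $V(m,\varepsilon)/m=1-W(m,\varepsilon)/m\to 1$, as claimed. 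Note that the hypothesis $\varepsilon\in(0,1)$ is used only through $\varepsilon>0$; the conclusion in fact holds for every positive $\varepsilon$.

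There is essentially no genuine obstacle here: the only point requiring a word of care is that the inequality is applied for fixed $\varepsilon$ while $m\to\infty$, so the constant $1/\varepsilon$ does not interfere with the limit; this is exactly why the density statement is uniform in $m$ but not in $\varepsilon$. The lemma will later be applied (via Lemma~\ref{l8} and the quantities $\beta_m$, $\tilde\beta_m$, $\Theta_m$, $\tilde\Theta_m$) to conclude that, along a density-one set of levels $m$, the relevant gap ratios are small, which is what the Hausdorff-dimension estimate for $f(E)$ ultimately needs.
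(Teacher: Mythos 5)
Your proof is correct and is essentially the same argument as the paper's: both bound the number of indices with $w_i\ge\varepsilon$ by $\frac{1}{\varepsilon}\sum_{i=0}^{m-1}w_i$ (a Markov-type inequality) and divide by $m$, so that the Ces\`aro hypothesis forces $\frac{m-V(m,\varepsilon)}{m}\to 0$. Your write-up is, if anything, slightly more careful than the paper's one-line display, which informally manipulates limits before they are known to exist.
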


\begin{proof}
Notice that \begin{equation}\label{111}
\sum_{i=0}^{m-1} w_i=\sum_{w_i<\varepsilon, 0\le i\le m-1}w_i+\sum_{w_i\ge \varepsilon, 0\le i\le m-1}w_i\ge \sum_{w_i\ge \varepsilon, 0\le i\le m-1}w_i \ge \big(m-V(m,\varepsilon ) \big)\varepsilon.\end{equation}
Since
\begin{equation*}
\lim_{m \to \infty} \frac{1}{m} \sum_{i=0}^{m-1} w_i=0,
\end{equation*}
we have
\begin{equation*}
1\ge\limsup\limits_{m \to \infty} \frac{V(m,\varepsilon )}{m}\ge\liminf\limits_{m \to \infty} \frac{V(m,\varepsilon )}{m} =1-\limsup\limits_{m \to \infty} \frac{m-V(m,\varepsilon )}{m}\ge1-\limsup\limits_{m \to \infty}\frac{1}{m\varepsilon }\sum_{i=0}^{m-1}w_i=1,
\end{equation*}
which impies that $\lim_{m \to \infty} \frac{V(m,\varepsilon )}{m} =1$.
\end{proof}

\begin{lem}\label{l10}

Let $E=E(I_{0},\left\{n_{k}\right\},\left\{c_{k}\right\})$ be a homogeneous Moran set which satisfies the conditions of Theorem {\rm\ref{thm2}}, $\{T_{m}\}_{m\ge 0}$ be the sequences of Lemma {\rm\ref{l6}}.

If $\dim_{H}E=1$, then we have
\begin{enumerate}
\item[\textup{(1)}]$\lim_{m\to \infty}\frac{\log_{M}l(T_{m}) }{m}=0$, where $l(T_{m})$ is the total length of all branches of $T_{m}$;
\item[\textup{(2)}]$\lim_{m\to \infty}\frac{\sum_{j=0}^{m-1}\beta_{j}}{m}=0$;
\item[\textup{(3)}]$\lim_{m \to \infty} \frac{1}{m} \sum_{j=0}^{m-1}\log\Theta_j=0$;
\item[\textup{(4)}]there exists $\alpha\in (0,1)$, such that $\lim_{m\to \infty}  \frac{\# S(m,\alpha )}{m}=1$, where $S(m,\varepsilon)=\{1\le i\le m:\chi_i<\varepsilon\}$ for any $\varepsilon\in (0,1)$.
\end{enumerate}

\end{lem}

\begin{proof}
(1)
For any $m\ge 1$, if there exists $k\ge1$, such that $m=m_{k}$, then $l(T_m )=l(T_{m_k}) =l(E^*_k)=n^*_1n^*_2\cdots n^*_k\delta^*_k$. Since $\delta^*_kn^*_1n^*_2\cdots n^*_k=l(E^*_k)\le 1$, we have $\frac{\log_M(n^*_1n^*_2\cdots n^*_k)}{-\log_M\delta^*_k}\le 1.$ Notice that $\dim_{H}E=1$, by Theorem \ref{thm}, we have
\begin{equation}\begin{aligned}
1\ge\limsup\limits_{k \to\infty}  \frac{\log_M n^*_1n^*_2\cdots n^*_k}{-\log_M\delta^*_k}&\ge\liminf\limits_{k \to\infty}  \frac{\log_M n^*_1n^*_2\cdots n^*_k}{-\log_M\delta^*_k}\\& =\liminf\limits_{k \to\infty}  \frac{\log_M n_1n_2\cdots n_k}{-\log_M(\delta_k-L_{k+1}-R_{k+1})}\\& =\dim_{H}E=1.
\end{aligned}
\end{equation}
Which implies that \begin{equation}\label{101}
\lim_{k \to\infty}  \frac{\log_M n^*_1n^*_2\cdots n^*_k}{-\log_M\delta^*_k}=1.
\end{equation}

Since $\log_Mn^*_j\le i_j+1$, $m_k=i_1+i_2+\cdots+i_k$ and $m_k\ge k$ for any $k\ge 1$ and $1\le j \le k$, by (\ref{101}), we have
\begin{equation*}
\begin{aligned}
0\ge\lim_{k \to \infty} \frac{\log_M n^*_1n^*_2\cdots n^*_k\delta^*_k}{m_k} & = \lim_{k \to \infty}\frac{\log_M(n^*_1n^*_2\cdots n^*_k)}{m_k}\frac{\log_M(n^*_1n^*_2\cdots n^*_k)+\log_M\delta^*_k}{\log_M(n^*_1n^*_2\cdots n^*_k)}\\
&\ge \lim_{k \to \infty}2[1-( \frac{\log_M n^*_1n^*_2\cdots n^*_k}{-\log_M\delta^*_k})^{-1}]
=0.
\end{aligned}
\end{equation*}
Which implies that \begin{equation}\label{102}
\lim_{k \to \infty}\frac{\log_{M}l(T_{m_k}) }{m_k}=\lim_{k \to \infty} \frac{\log_M n^*_1n^*_2\cdots n^*_k\delta^*_k}{m_k}=0.
\end{equation}

If $E$ satiefies the condition {\rm(A)} of Theorem {\rm\ref{thm2}}, by Lemma \ref{l7}, we have $l(T_m) \ge(1-\frac{2\omega_1}{M})l(T_{m_{k-1}})$ for any $k\ge1 $ and $m_{k-1}\le m<m_k$. Notice that for any $\varepsilon>0$, there exists $N>0$, such that
$\frac{\log_{M} l(T_{m_k}) }{m_k}>-\frac{\varepsilon }{2} $ and $\frac{\log_{M}(1-\frac{2\omega_1}{M})^{-1} }{m_k}<\frac{\varepsilon }{2} $ for any $k\ge N$.
 Therefore if $m\ge m_{N}$, there is $h\ge N$ such that $m_h \le m < m_{h+1}$, then we have $$\frac{\log_M l(T_m)}{m}\ge\frac{\log_M l(T_{m_h})+\log_M(1-\frac{2\omega_1}{M})}{m_h}>-\varepsilon ,$$
which implies $\liminf\limits_{k \to\infty}   \frac{\log_M l(T_m ) }{m}=0$.
Since $l(T_m)\le 1$ for any $m\ge 0$,
$$\limsup\limits_{k \to\infty}  \frac{\log_M l(T_m) }{m}\le 0,$$ which implies that $$\lim_{m \to \infty} \frac{\log_M l(T_m) }{m}=0.$$

If $E$ satiefies the condition {\rm(B)} of Theorem {\rm\ref{thm2}}, by the similar proof (repalce $1-\frac{2\omega_1}{M}$ by $1-\frac{2(\omega_2+1)}{M}$), we have
$$\lim_{m \to \infty} \frac{\log_M l(T_m) }{m}=0.$$

(2) If $E$ satiefies the condition {\rm(A)} of Theorem {\rm\ref{thm2}}, for any $m_{k-1}\le m <m_k$, let $\kappa_m=\min\{\frac{\eta^*_{\sigma,l}}{|I|}:I\in\mathcal{T}_m, \sigma \in D_{k-1}, 1\le l\le n_k-1\}$. By the condition (A) of Theorem \ref{thm2} and (4) of Lemma \ref{l6}, we obtain $\beta_m\le 2\omega_1^2\kappa_m$. For any $0\le j\le m-1$ and $I\in T_j$, $\frac{T_{j+1}}{T_j}\le \frac{|I|-\kappa_j|I|}{|I|}=1-\kappa_j$, then  we have $l(T_m)\le \prod_{j=0}^{m-1}(1-\kappa_j)$. By the inequality $\log_M(1-x)\le -x$ for any $x\in[0,1)$, combining with (1), we have
$$0\ge \lim_{m\to \infty}-\frac{1}{m}\sum_{j=0}^{m-1}\kappa_j\ge \lim_{m\to \infty}\frac{1}{m}\sum_{j=0}^{m-1}\log(1-\kappa_j)\ge \lim_{m\to \infty}\frac{\log_{M}l(T_{m}) }{m}=0,$$
which implies that $$\lim_{m\to \infty}\frac{1}{m}\sum_{j=0}^{m-1}\kappa_j=0.$$
Then we have
$$0=2\omega^2\lim_{m\to \infty}\frac{1}{m}\sum_{j=0}^{m-1}\kappa_j\ge \lim_{m\to \infty}\frac{1}{m}\sum_{j=0}^{m-1}\beta_j\ge 0,$$
which implies that
$$\lim_{m\to \infty}\frac{1}{m}\sum_{j=0}^{m-1}\beta_j=0.$$

If $E$ satiefies the condition {\rm(B)} of Theorem {\rm\ref{thm2}}, by (1), we have
\begin{equation*}
\begin{aligned}
\lim_{k \to \infty} &\frac{1}{m_k} \log_{M}\prod_{j = 1}^{k}\frac{\delta^*_{j-1}-e^*_{j}-(L^*_j+R^*_j)}{\delta^*_{j-1}}\\
&= \lim_{k \to \infty}\frac{1}{m_k}\log_{M}\prod_{j = 1}^{k}\frac{n^*_j\delta^*_j}{\delta^*_{j-1}}\\
&= \lim_{k \to \infty}(\frac{1}{m_k}\log_{M}n^*_1n^*_2\cdots n^*_k\delta^*_k-\frac{1}{m_k}\log_{M}\delta^*_0)\\
&= \lim_{k \to \infty}(\frac{1}{m_k}\log_{M}l(S_{m_k}) -\frac{1}{m_k}\log_{M}\delta^*_0)\\
&=0.
\end{aligned}
\end{equation*}

By the inequality $\log_M(1-x)\le -x$ for any $x\in[0,1)$, we have $$0=\lim_{k \to \infty} \frac{1}{m_k} \log_{M}\prod_{j = 1}^{k}\frac{\delta^*_{j-1}-e^*_{j}-(L^*_j+R^*_j)}{\delta^*_{j-1}}\le\lim_{k \to \infty} -\frac{1}{m_k} \sum_{j = 1}^{k}\frac{e^*_{j}+(L^*_j+R^*_j)}{\delta^*_{j-1}}\le 0,$$
which implies that
$$\lim_{k \to \infty} \frac{1}{m_k} \sum_{j = 1}^{k}\frac{e^*_{j}+(L^*_j+R^*_j)}{\delta^*_{j-1}}=0.$$
Notice that for any $1\le j \le k$, $\bar\alpha^*_{j}=\bar\alpha_{j}+L^*_j+R^*_j\le e^*_{j}+L^*_j+R^*_j$, then
\begin{equation}\label{65}
\lim_{k \to \infty} \frac{1}{m_k} \sum_{j = 1}^{k}\frac{\bar{\alpha}^*_j}{\delta^*_{j-1}}=0.
\end{equation}

Next we estimate  $\beta_m$ for $m\ge0$. Let $k\in \mathbb{N} $ satisfying $m_{k-1}\le m < m_k$.
If $I\in T_{m_{k}-1}$, $I$ contains at least $2$ branches of $T_{m_k}$, notice that $\underline{\alpha}^*_k\ge L^*_k+R^*_k $,  therefore $\left | I \right | \ge 2\delta^*_k+(L^*_k+R^*_k)$.
If $I\in T_{m_{k}-2}$, $I$ contains at least $2M$ branches of $T_{m_k}$, therefore $\left | I \right | \ge 2M\delta^*_k+(2M-1)(L^*_k+R^*_k)$.
If $t\in \{1,2, \cdots, m_k-m_{k-1}\}$, then for any $I \in T_{m_k-t}$, $I$ contains at least $2M^{t-1}$ branches of $T_{m_k}$, therefore $\left | I \right | \ge 2M^{t-1}\delta^*_k+(2M^{t-1}-1)(L^*_k+R^*_k)$.
For any $L\in \mathcal{G}_m$, we have $\left | L \right | \le \bar{\alpha}^*_k,$
then for any $t\in \{1,2, \cdots, m_k-m_{k-1}\}$, we obtain
\begin{equation}
\begin{aligned}
\beta_{m_k-t}&\le \frac{\bar{\alpha}^*_k}{2M^{t-1}\delta^*_k+(2M^{t-1}-1)(L^*_k+R^*_k)}\\
&\le \frac{\bar{\alpha}^*_k}{2^{t}\delta^*_k+(2^{t}-1)(L^*_k+R^*_k)}\\
&\le \frac{\bar{\alpha}^*_k}{2^{t-1}(\delta^*_k+L^*_k+R^*_k)}.
\end{aligned}
\end{equation}
Therefore,
\begin{equation}\label{xxxx}
\begin{aligned}
\sum_{m = m_{k-1}}^{m_k-1} \beta_{m} = \sum_{t  = 1}^{i_k} \beta_{m_k-t}&\le\frac{\bar{\alpha}^*_k}{\delta^*_k+L^*_k+R^*_k}\sum_{t = 1}^{i_k}\frac{1}{2^{t-1}} \\
 &=\frac{\bar{\alpha}^*_k}{\delta^*_k+L^*_k+R^*_k}\sum_{t =0}^{i_k-1}\frac{1}{2^{t}}\\
&\le \frac{2\bar{\alpha}^*_k}{\delta^*_k+L^*_k+R^*_k}.
\end{aligned}
\end{equation}
Then we have
\begin{equation}\label{xs}
\frac{1}{m_k} \sum_{j=0}^{m_k-1} \beta_{j} \le \frac{2}{m_k}\sum_{j=1}^{k} \frac{\bar{\alpha}^*_j}{\delta^*_j+L^*_j+R^*_j}.
\end{equation}

For any $\varepsilon>0$, there exists $\delta>0$, such that
\begin{equation}\label{69}
0<\frac{1+\omega_2}{\log_M\frac{1}{(1+\omega_2)\delta}-1} <\frac{\varepsilon}{4}.
\end{equation}
If $j\ge1$ satisfying $\frac{\delta^*_j+L^*_j+R^*_j}{\delta^*_{j-1}} <\delta$.
Since $\bar{\alpha}_j\le \omega_2 \delta_j=\omega_2(\delta^*_j+L^*_j+R^*_j),$ we have
\begin{equation*}
\begin{aligned}
\delta^*_{j-1} & = e^*_{j}+L^*_j+R^*_j+n^*_j\delta^*_j\\
&=e_j+n^*_j(L^*_j+R^*_j)+n^*_j\delta^*_j\\
&\le (n^*_j-1)\bar{\alpha}_j+n^*_j(L^*_j+R^*_j+\delta^*_j)\\
&\le (n^*_j-1)\omega_2 (L^*_j+R^*_j+\delta^*_j)+n^*_j(L^*_j+R^*_j+\delta^*_j)\\
&\le n^*_j(1+\omega_2)(L^*_j+R^*_j+\delta^*_j),
\end{aligned}
\end{equation*}
which implies $(1+\omega_2)n^*_j\delta>(1+\omega_2)n^*_j\frac{L^*_j+R^*_j+\delta^*_j}{\delta^*_{j-1}} \ge 1.$
Notice that $i_j\ge \log_{M}n^*_j-1$, by (\ref{69}), we have
\begin{equation}\label{ss}
\frac{1+\omega_2}{i_j} <\frac{\varepsilon }{4}.
\end{equation}

By (\ref{65}), there exists $M_2>0$, such that for any $k\ge M_2$,
\begin{equation}\label{212}
\frac{1}{m_k} \sum_{j = 1}^{k}\frac{\bar{\alpha}^*_j}{\delta^*_{j-1}}<\frac{\varepsilon \delta}{4}.
\end{equation}
Notice that if $\bar{\alpha}_j\le \omega_2 \delta_j$, then
 \begin{align}\label{lwl}
\overline{\alpha } _{j}^{*}=\overline{\alpha}_j+L^*_j+R^*_j\le (\omega_2+1)\delta_j=(\omega_2+1)(\delta^*_{j}+L^*_j+R^*_j).
\end{align}
Therefore, if $k\ge M_2$, by (\ref{ss}),  (\ref{212}) and  (\ref{lwl}), we have
\begin{equation*}
\begin{aligned}
\frac{1}{m_k} \sum_{j = 1}^{k}\frac{\bar{\alpha}^*_j}{\delta^*_{j}+L^*_j+R^*_j}&\le \frac{1}{m_k}(\sum_{\substack{j=1\\\frac{\delta^*_j+L^*_j+R^*_j}{\delta^*_{j-1}}<\delta}}^{k}(1+\omega_2)+\sum_{\substack{j=1\\\frac{\delta^*_j+L^*_j+R^*_j}{\delta^*_{j-1}}\ge\delta}}^{k}\frac{\bar{\alpha}^*_j}{\delta^*_{j}+L^*_j+R^*_j} )\\
&\le \frac{1}{m_k}\sum_{j = 1}^{k}\frac{i_j\varepsilon }{4}+\frac{1}{m_k}\sum_{j = 1}^{k}\frac{\bar{\alpha}^*_j}{\delta^*_{j-1}}\cdot \frac{1}{\delta}\\
&<\frac{\varepsilon }{4}+\frac{\varepsilon }{4}=\frac{\varepsilon}{2},
\end{aligned}
\end{equation*}
which implies  $$\lim_{k \to \infty} \frac{1}{m_k} \sum_{j = 1}^{k}\frac{\bar{\alpha}^*_j}{\delta^*_{j}+L^*_j+R^*_j}=0.$$
Combing (\ref{xs}), we have
\begin{equation}\label{qwe}
\lim_{k \to \infty}\frac{1}{m_k} \sum_{j=0}^{m_k-1} \beta_{j}=0.
\end{equation}

For any  $k\ge 1$, if  $m_{k-1}<m<m_k$, then we have
\begin{equation}
\frac{1}{m} \sum_{j=0}^{m-1} \beta_{j}=\frac{1}{m} (\sum_{j=0}^{m_{k-1}-1} \beta_{j}+\sum_{j=m_{k-1}-1}^{m-1} \beta_{j})\le \frac{1}{m_{k-1}} \sum_{j=0}^{m_{k-1}-1} \beta_{j}+\frac{1}{m}\sum_{j=m_{k-1}-1}^{m-1}\beta_{j}.
\end{equation}
Notice that by (\ref{xxxx}) and (\ref{lwl}),
\begin{equation}\label{qas}
\sum_{j=m_{k-1}-1}^{m-1}\beta_{j}\le\frac{2\bar{\alpha}^*_k}{\delta^*_k+L^*_k+R^*_k}\le 2(1+\omega_2).
\end{equation}
Therefore, combing (\ref{qwe}) and (\ref{qas}), we obtain
\begin{equation*}
\lim_{m \to \infty} \frac{1}{m} \sum_{j=0}^{m-1} \beta_{j}=0.
\end{equation*}

(3) Fix $\varepsilon \in(0,\frac{1}{M^2+1} )$, such that $\log(1-(M^2+1)x)\ge -2(M^2+1)x$
for any $x\in[0,\varepsilon)$.
Then we have
\begin{equation*}
\begin{aligned}
0\ge \frac{1}{m} \sum_{\substack{j = 0\\\beta_j<\varepsilon }}^{m-1}\log(1-(M^2+1)\beta_j)\ge\frac{-2}{m} \sum_{\substack{j = 0\\\beta_j<\varepsilon }}^{m-1}(M^2+1)\beta_j&\ge
 -2(M^2+1)(\frac{1}{m}\sum_{j=0}^{m-1}\beta_j ).
\end{aligned}
\end{equation*}
By (2),
we have
\begin{equation*}
\lim_{m \to \infty} (\frac{1}{m} \sum_{\substack{j = 0\\\beta_j<\varepsilon }}^{m-1}\log(1-(M^2+1)\beta_j))=0,
\end{equation*}
which implies that
\begin{equation}\label{3a1}
\lim_{m \to \infty} ( \prod_{\substack{j = 0\\\beta_j<\varepsilon }}^{m-1}(1-(M^2+1)\beta_j))^{\frac{1}{m} }=1.
\end{equation}

If $E$ satiefies the condition {\rm(A)} of Theorem {\rm\ref{thm2}}, then each branch of $T_{m-1}$ contains at most $M^{2}$ branches of $T_{m}$ for any $m\ge1$, then we have $\frac{l(T_j) }{l(T_{j-1})}\le \min\{1,M^2\Lambda ^*(j)\}$ for any $1\le j \le m$. By (4) of Lemma \ref{l6}, we have $\Lambda^*(j)\le4\omega_1^2\Lambda_*(j)$ for any $j\ge 1$,
thus
\begin{equation}\label{3a2}
\prod_{j\in\Omega } (4M^2\omega_1^2\Lambda_*(j))\ge \prod_{j\in\Omega } (M^2\Lambda_*(j))\ge l(T_m)
\end{equation}
for any set $\Omega \subset \{1,2,\cdots,m\}$.

Let $H(m,\varepsilon)=\# (\{0\le j \le m-1:\beta_j<\varepsilon\}),$ combing (2) and Lemma \ref{l9}, we conclude that
\begin{equation}\label{3a3}
\lim_{m \to \infty} (1-\frac{H(m,\varepsilon )}{m}) =0.
\end{equation}

Notice that  for any $j\ge 0$, \begin{equation}\label{3a4}
\Theta_j=\min\{\frac{\sum_{i=1}^{N(I_j) }\left | I_{j,i} \right | }{\left | I_j \right | }: I_j\in T_j \}\ge \frac{\min_{I\in \mathcal{S}_{j+1} }\left | I \right | }{\max_{I\in \mathcal{S}_{j} }\left | I \right |}=\Lambda_*(j+1).
\end{equation}
Combing (\ref{3a2}), (\ref{3a4}) and Lemma \ref{l8}, we obtain

\begin{equation*}
\begin{aligned}
\lim_{m \to \infty} ( \prod_{j  = 0}^{m-1}\Theta_j)^{\frac{1}{m} } & =\lim_{m \to \infty} ( \prod_{\substack{j  = 0\\\beta_j<\varepsilon }}^{m-1}\Theta_j)^{\frac{1}{m} }( \prod_{\substack{j  = 0\\\beta_j\ge\varepsilon }}^{m-1}\Theta_j)^{\frac{1}{m} }\\
&\ge  \lim_{m \to \infty}(\prod_{\substack{j  = 0\\\beta_j<\varepsilon }}^{m-1}(1-(M^2+1)\beta_j))^{\frac{1}{m} }( \prod_{\substack{j  = 0\\\beta_j\ge\varepsilon }}^{m-1}\frac{1}{4M^2\omega_1^2} )^{\frac{1}{m} }l(T_m)^{\frac{1}{m} } \\
&=\lim_{m \to \infty}(\prod_{\substack{j  = 0\\\beta_j<\varepsilon }}^{m-1}(1-(M^2+1)\beta_j))^{\frac{1}{m} }(\frac{1}{4M^2\omega_1^2} )^{1-\frac{H(m,\varepsilon )}{m} }l(T_m)^{\frac{1}{m} }.
\end{aligned}
\end{equation*}

By (\ref{3a1}), we have $$\lim_{m \to \infty}(\prod_{\substack{j  = 0\\\beta_j<\varepsilon }}^{m-1}(1-(M^2+1)\beta_j))^{\frac{1}{m} }=1.$$
And by (\ref{3a3}) and (1), $$\lim_{m \to \infty}(\frac{1}{4M^2\omega_1^2} )^{1-\frac{H(m,\varepsilon )}{m} }l(T_m)^{\frac{1}{m} }=1.$$
Then $\lim_{m \to \infty} ( \prod_{j  = 0}^{m-1}\Theta_j)^{\frac{1}{m} }=1$, which implies
\begin{equation*}
\lim_{m \to \infty}\frac{1}{m} \sum_{j=0}^{m-1}\log \Theta_j=0 .
\end{equation*}

If $E$ satiefies the condition {\rm(B)} of Theorem {\rm\ref{thm2}}, by the similar proof(repalce $\omega_1$ by $\omega_2+1$), we also have
\begin{equation*}
\lim_{m \to \infty}\frac{1}{m} \sum_{j=0}^{m-1}\log \Theta_j=0 .
\end{equation*}

(4) If $E$ satiefies the condition {\rm(A)} of Theorem {\rm\ref{thm2}}, by (4) of Lemma \ref{l6}, for any $j\ge 1$, $J\in \mathcal{T}_j$ and $J^{\prime}\in \mathcal{T}_{j-1}$, we have
\begin{equation*}
\chi_j\le\frac{\max_{\hat{\hat{J}}\in T_j}\left | \hat{\hat{J}} \right | }{\min_{\hat{J}\in T_{j-1}}\left | \hat{J} \right |}\le\frac{2\omega_1\left | J \right | }{\frac{1}{2\omega_1} \left |J^{\prime} \right |}\le 4\omega_1^2\frac{\left | J \right |}{\left |J^{\prime} \right |}.
\end{equation*}

Taking $J^0\in T_j$ which satisfies $\chi_j=\frac{\left | J^0 \right |}{\left |X_a(J^0) \right |} $.
Since $X_a(J^0)$  contains at least $2$ branches of $T_j$, we have
\begin{equation*}
1>\chi_j+\frac{\left | J^* \right |}{\left |X_a(J^0) \right |}\ge \chi_j+\frac{\chi_j}{4\omega_1^2}=(\frac{4\omega_1^2+1}{4\omega_1^2} )\chi_j,
\end{equation*}
where $J^*\in \mathcal{T}_j$, $J^*\subset X_a(J^0)$ and $J^*\neq J$. Let $\alpha \in (\frac{4\omega_1^2}{4\omega_1^2+1},1)$, then
\begin{equation*}
\lim_{m\to \infty} \frac{\#S(m,\alpha )}{m} =\lim_{m\to \infty}\frac{\#\{1\le i\le m:\chi_i<\alpha\}}{m} =1.
\end{equation*}

If $E$ satiefies the condition {\rm(B)} of Theorem {\rm\ref{thm2}},
 by the similar proof(repalce $\omega_1$ by $\omega_2+1$), we also have
\begin{equation*}
\lim_{m\to \infty} \frac{\#S(m,\alpha )}{m} =\lim_{m\to \infty} \frac{\#\{1\le i\le m:\chi_i<\alpha\}}{m} =1.
\end{equation*}

\end{proof}

\subsection{The measure supported on $f(E)$}
Let $E=E(I_{0},\left\{n_{k}\right\},\left\{c_{k}\right\})$ be a homogeneous Moran set which satisfies the conditions of Theorem \ref{thm2}, $f$ be a \text{\rm1}-dimensional quasisymmetric mapping, and $\{T_{m}\}_{m\ge0}$ be the sequences in Lemma \ref{l6}. We are going to define a probability Borel measure on $f(E)$ to estimate the lower bound of the Hausdorff dimension of $f(E)$ by Lemma \ref{l1}.

For any $m\ge0$ and any  branch of $T_m$, denoted by $I_m$, let $J_{m}=f(I_m)$. Notice that the image sets of all branches of $T_m$ under $f$ constitute $f(T_m)$, for convenience,  we also call $J_m$ a branch of $f(T_m)$. Let $J_{m,1}\cdots,J_{m,N(J_{m})}$ be all branches of $f(T_{m+1})$ contained  in $J_{m}$  locating from left to right, where $N(J_{m})$ is the number of the branches of $f(T_{m+1})$ contained in $J_{m}$, then $N(J_{m})\le M^{2}$.

For any $d\in(0,1)$, $m\ge0$ and $1\le i\le N(J_{m})$, define a probability Borel measure $\mu_{d}$ on $f(E)$ satisfying $\mu _d(f(T_0))=1$ and
\begin{equation}
\mu_d(J_{m,i})=\frac{\left | J_{m,i} \right |^d }{\sum_{j=1}^{N(J_{m})}\left | J_{m,j} \right |^d }\mu_d(J_{m}).
\end{equation}
\bigskip

Next, for any $d\in(0,1)$ and $k\ge 1$, we estimate  $\mu_d(U)$ for any branch $U$ of $f(T_k)$.

\begin{proposition}\label{p1} For any $d\in(0,1), k\ge1$, let $U=J_k$ be a branch of $f(T_k)$, then there exists a contant $C_1>0$, such that $\mu _d(U)\le C_1|U|^{d}$.
\end{proposition}

\begin{proof}
If $E$ satiefies the condition {\rm(A)} or the condition {\rm(B)} of Theorem {\rm\ref{thm2}}, for any $d\in(0,1)$, $k\ge 1$,
if $U=J_k$ is a branch of $f(T_k)$, then for any $0\le j\le k-1$, suppose $J_j$ is a branch of $f(T_j)$ which contains $U$, then
$U=J_k\subset J_{k-1}\subset \cdots \subset J_1\subset J_0=f(T_0).$ Without loss of generality, we  assume that $J_0=f(T_0)=[0,1]$.
By the definition of $\mu_d$, we have

\begin{equation*}
\frac{\mu_d(J_k)}{\left | J_k \right |^d }=\prod_{j=0}^{k-1}\frac{\left | J_j \right |^d}{\sum_{i=1}^{N(J_j)}\left | J_{j,i} \right |^d}.
\end{equation*}
Then if we prove
\begin{equation*}
\liminf\limits_{k \to\infty} (\prod_{j=0}^{k-1}\frac{\sum_{i=1}^{N(J_j)}\left | J_{j,i} \right |^d}{\left | J_j \right |^d})^{\frac{1}{k}}>1,
\end{equation*}
we can finish the proof of  Proposition \ref{p1}.

We will estimate $\frac{\sum_{i=1}^{N(J_j)}\left | J_{j,i} \right |^d}{\left | J_j \right |^d}$ for any $0\le j\le k-1.$ Suppose $J_{j,1},\cdots, J_{j,N(J_j)}$ are all branches of $f(T_{j+1})$ contained in $J_j$ locating from left to right, and  $I_j=f^{-1}(J_j)$ is a branch of $T_j$. For any $1\le l\le N(J_j)-1$,  let
$$L_{j,0}=\big[\min(J_j),\min(J_{j,1})\big),~L_{j,N(J_j)}=\big(\max(J_{j,N(J_j)}),\max(J_j)\big],$$
$$L_{j,l}=\big(\max(J_{j,l}),\min(J_{j,l+1})\big).$$
Then $J_j=\big(\bigcup_{i=1}^{N(J_j)}J_{j,i} \big)\bigcup \big(\bigcup_{l=0}^{N(J_j)}L_{j,l} \big)$. Notice that it may exists $0\le l\le N(J_j)$, such that $L_{j,l}=\emptyset$.
Let $G_{j,l}=f^{-1}(L_{j,l})\subset I_j-T_{j+1}$ for any $0\le l \le N(J_j)$.

We decompose the estimation formula such as
\begin{equation}\label{p11}
\frac{\sum_{i=1}^{N(J_j)}\left | J_{j,i} \right |^d }{\left | J_j \right |^d }= \frac{\sum_{i=1}^{N(J_j)}\left | J_{j,i} \right |^d }{(\sum_{i=1}^{N(J_j)}\left | J_{j,i} \right |)^d }\frac{{(\sum_{i=1}^{N(J_j)}\left | J_{j,i} \right |)^d }}{\left | J_j \right |^d}.
\end{equation}

Let $\alpha \in (0,1)$ be the constant in (4) of Lemma \ref{l10}, $p \in (0,1]$ be the constant in Lemma \ref{l2}, $\varepsilon>0$ be a sufficiently small constant satisfying
\begin{enumerate}
\item[\textup{(1)}]$0<\varepsilon<\frac{1-\alpha}{M^2+1}$;
\item[\textup{(2)}]$(1-4(M^2+1)x^p)\ge (1-x^p)^{4(M^2+1)}$ for any $x\in[0,\varepsilon)$;
\item[\textup{(3)}]$\log(1-x^p)\ge -2x^p$  for any $x\in[0,\varepsilon)$.
\end{enumerate}

Without loss of generality, suppose $\left | J_{j,1} \right |=\max_{1\le i \le N(J_j)}\{\left | J_{j,i} \right |\}  $, $y_l=\frac{\left | J_{j,l} \right | }{\left | J_{j,1} \right | } $ for any $1\le l\le N(J_j)$, then we have
\begin{equation}\label{333}
\begin{aligned}
\frac{\sum_{i  = 1}^{N(J_j)}\left | J_{j,i} \right |^d }{(\sum_{i  = 1}^{N(J_j)}\left | J_{j,i} \right |)^d }&=\frac{y_1^d+y_2^d+\cdots+y_{N(J_j)}^d}{(y_1+y_2+\cdots+y_{N(J_j)})^d}\\
&=  \frac{1+y_2^d+\cdots+y_{N(J_j)}^d}{(1+y_2+\cdots+y_{N(J_j)})^d}\\
&\ge (1+y_2+\cdots+y_{N(J_j)})^{1-d}>1.
\end{aligned}
\end{equation}
Therefore,
\begin{equation}\label{p12}
\frac{\sum_{i=1}^{N(J_j)}\left | J_{j,i} \right |^d }{\left | J_j \right |^d }= \frac{\sum_{i=1}^{N(J_j)}\left | J_{j,i} \right |^d }{(\sum_{i=1}^{N(J_j)}\left | J_{j,i} \right |)^d }\frac{{(\sum_{i=1}^{N(J_j)}\left | J_{j,i} \right |)^d }}{\left | J_j \right |^d}\ge \frac{{(\sum_{i=1}^{N(J_j)}\left | J_{j,i} \right |)^d }}{\left | J_j \right |^d}.
\end{equation}

\textbf{(a)} If $\beta_j<\varepsilon$, then $\frac{\left | G_{j,l} \right | }{\left | I_j \right | } \le\beta_j$  for any $0\le l \le N(J_j)$. By Lemma \ref{l2}, $\frac{\left | L_{j,l} \right | }{\left | J_j \right | }\le4(\frac{\left | G_{j,l} \right | }{\left | I_j \right | })^p \le4(\beta_j)^p $, then we have

\begin{equation}\label{619}
(\frac{{\sum_{i=1}^{N(J_j)}\left | J_{j,i} \right | }}{\left | J_j \right |})^d=(\frac{\left | J_{j} \right |-{\sum_{i=0}^{N(J_j)}\left | L_{j,i} \right | }}{\left | J_j \right |})^d\ge(1-4(M^2+1)\beta^p_j)^d\ge(1-\beta^p_j)^{4(M^2+1)d}.
\end{equation}
Moreover, if $\beta_j<\varepsilon$ and $\chi_{j+1}<\alpha$, by Lemma \ref{l2} and the Jensen inequality,
we obtain
\begin{equation}\label{620}
\frac{{\sum_{l=2}^{N(J_j)}\left | J_{j,l} \right | }}{\left | J_j \right |}\ge\lambda \frac{{\sum_{l=2}^{N(J_j)}\left | I_{j,l} \right |^q }}{\left | I_j \right |^q}\ge(M^2-1)^{1-q}\lambda(\frac{{\sum_{l=2}^{N(J_j)}\left | I_{j,l} \right | }}{\left | I_j \right |})^q.
\end{equation}

Since $\frac{\left | I_{j,l} \right | }{\left | I_j \right | }\le\chi_{j+1}<\alpha$ for any $1\le l \le N(J_j)$, and $\frac{\left | G_{j,l} \right | }{\left | I_j \right | }\le \beta_j<\varepsilon $ for any $0\le l \le N(J_j)\le M^2$, we conclude that
\begin{equation}\label{621}
\frac{\sum_{l=2}^{N(J_j)}\left | I_{j,l} \right | }{\left | I_j \right |}=\frac{\left | I_{j} \right |-\left | I_{j,1} \right |-\sum_{l=0}^{N(J_j)}\left | G_{j,l} \right |  }{\left | I_j \right |}\ge1-\alpha-(M^2+1)\varepsilon .
\end{equation}
Combining (\ref{620}) and (\ref{621}), we obtain

\begin{equation}\label{p13}
\frac{{\sum_{l=2}^{N(J_j)}\left | J_{j,l} \right | }}{\left | J_j \right |}\ge (M^2-1)^{1-q}\lambda(1-\alpha-(M^2+1)\varepsilon )^q.
\end{equation}

By Lemma \ref{l2}, for any $1\le l \le N(J_j)$, we have
\begin{equation*}
\frac{\left | J_{j,l} \right | }{\left | J_j \right |}=\frac{\left | f(I_{j,l}) \right | }{\left | f(I_j) \right |}\le4\frac{\left | I_{j,l} \right |^p }{\left | I_j \right |^p}\le 4\alpha^p.
\end{equation*}
Hence,
\begin{equation}\label{3333}
\begin{aligned}
y_2+y_3+\cdots+y_{N(J_j)}=\frac{\left | J_j \right |}{\left | J_{j,1} \right | }\frac{{\sum_{l=2}^{N(J_j)}\left | J_{j,l} \right | }}{\left | J_j \right |}&\ge \frac{\left | J_j \right |}{\left | J_{j,1} \right | }\frac{\lambda(1-\alpha-(M^2+1)\varepsilon )^q}{(M^2-1)^{q-1}}\\&\ge \frac{\lambda(1-\alpha-(M^2+1)\varepsilon )^q}{4\alpha^p(M^2-1)^{q-1}}.
\end{aligned}
\end{equation}

By (\ref{p12}) and (\ref{619}), if $\beta_j<\varepsilon$, we have
\begin{equation}\label{637}
\frac{\sum_{i=1}^{N(J_j)}\left | J_{j,i} \right |^d }{\left | J_j \right |^d }\ge (1-\beta^p_j)^{4(M^2+1)d}.
\end{equation}
If $\beta_j<\varepsilon$ and $\chi_{j+1}<\alpha$, combing (\ref{p11}),  (\ref{333}), (\ref{619}) and (\ref{3333}), we have
\begin{equation}\label{638}
\frac{\sum_{i=1}^{N(J_j)}\left | J_{j,i} \right |^d }{\left | J_j \right |^d }\ge \eta(1-\beta^p_j)^{4(M^2+1)d},
\end{equation}
where $\eta=(1+\frac{\lambda(1-\alpha-(M^2+1)\varepsilon )^q}{4\alpha^p(M^2-1)^{q-1}})^{1-d}>1.$

On other hand, for $\beta_j<\varepsilon$, we have
\begin{equation*}
\begin{aligned}
0\ge \frac{1}{k} \sum_{\substack{j = 0\\\beta_j<\varepsilon }}^{k-1}\log(1-\beta^p_j)\ge\frac{-2}{k} \sum_{\substack{j = 0\\\beta_j<\varepsilon }}^{k-1}\beta^p_j&\ge\frac{-2}{k}\sum_{j = 0}^{k-1}\beta^p_j\\
 &\ge-2(\frac{1}{k}\sum_{j=0}^{k-1}\beta_j )^p.
\end{aligned}
\end{equation*}
for any $k\ge 1$. By (2) of Lemma \ref{l10}, we have $\lim_{k \to \infty} \frac{1}{k} \sum_{\substack{j = 0\\\beta_j<\varepsilon }}^{k-1}\log(1-\beta^p_j)=0,$ which implies that
\begin{equation}\label{639}
\lim_{k \to \infty} [\prod_{\substack{j = 0\\\beta_j<\varepsilon }}^{k-1}(1-\beta^p_j)]^{\frac{1}{k}}=1.
\end{equation}

\textbf{(b)} If $\beta_j\ge \varepsilon$, by  Lemma \ref{l2}  and the Jensen inequality, we have

\begin{equation*}
\frac{\sum_{l=1}^{N(J_j)}\left | J_{j,l} \right | }{\left | J_j \right |}\ge \lambda\frac{\sum_{l=1}^{N(J_j)}\left | I_{j,l} \right |^q }{\left | I_j \right |^q}\ge \frac{\lambda}{M^{2(q-1)}}(\frac{\sum_{l=1}^{N(J_j)}\left | I_{j,l} \right |}{\left | I_j \right |})^q \ge\frac{\lambda}{M^{2(q-1)}}\Theta^q_j.
\end{equation*}
By (\ref{p12}),
\begin{equation}\label{640}
\frac{\sum_{l=1}^{N(J_j)}\left | J_{j,l} \right |^d }{\left | J_j \right |^d}\ge \frac{(\sum_{l=1}^{N(J_j)}\left | J_{j,l} \right |)^d }{\left | J_j \right |^d}\ge (\frac{\lambda}{M^{2(q-1)}}\Theta^q_j)^d.
\end{equation}

For any $k\ge1$, let $P(k)=\#(\{0\le j \le k-1: \beta_j<\varepsilon\})$, $R(k)=\#(\{1\le j \le k: \chi_j<\alpha\})$ and $PR(k)=\#(\{1\le j \le k: \beta_{j-1}<\varepsilon, \chi_j<\alpha\})$$(\#$ denotes the cardinality$)$.
Notice that by (2) of Lemma \ref{l10},
\begin{equation*}
\lim_{k\to \infty} \frac{1}{k} \sum_{j=0}^{k-1}\beta_j=0,
\end{equation*}
by Lemma \ref{l9}, we have
\begin{equation}\label{pk}
\lim_{k\to \infty} \frac{P(k)}{k}=1.
\end{equation}
By (4) of Lemma \ref{l10}, we have
\begin{equation*}
\lim_{k\to \infty} \frac{R(k)}{k}=1,
\end{equation*}
then
\begin{equation}\label{prk}
\lim_{k\to \infty} \frac{PR(k)}{k}=1.
\end{equation}

Combing (\ref{637}), (\ref{638}) and (\ref{640}), we obtain

\begin{equation*}
\begin{aligned}
\prod_{j=0}^{k-1} \frac{\sum_{i=1}^{N(J_j)}\left | J_{j,i} \right |^d }{\left | J_j \right |^d }&= \prod_{\substack{j = 0\\\beta_j<\varepsilon,\chi_{j+1}<\alpha}}^{k-1} \frac{\sum_{i=1}^{N(J_j)}\left | J_{j,i} \right |^d }{\left | J_j \right |^d }\prod_{\substack{j = 0\\\beta_j<\varepsilon,\chi_{j+1}\ge\alpha}}^{k-1} \frac{\sum_{i=1}^{N(J_j)}\left | J_{j,i} \right |^d }{\left | J_j \right |^d }\prod_{\substack{j = 0\\\beta_j\ge\varepsilon}}^{k-1} \frac{\sum_{i=1}^{N(J_j)}\left | J_{j,i} \right |^d }{\left | J_j \right |^d }\\
&\ge \eta^{PR(k)}\prod_{\substack{j = 0\\\beta_j<\varepsilon}}^{k-1}(1-\beta^p_j)^{4(M^2+1)d}\prod_{\substack{j = 0\\\beta_j\ge\varepsilon}}^{k-1}(\frac{\lambda}{M^{2(q-1)}}\Theta^q_j)^d\\
&\ge\eta^{PR(k)}\prod_{\substack{j = 0\\\beta_j<\varepsilon}}^{k-1}(1-\beta^p_j)^{4(M^2+1)d}(\prod_{j=0}^{k-1}\Theta_j )^{qd}\prod_{\substack{j = 0\\\beta_j\ge\varepsilon}}^{k-1}(\frac{\lambda}{M^{2(q-1)}})^d\\
&= \eta^{PR(k)}\prod_{\substack{j = 0\\\beta_j<\varepsilon}}^{k-1}(1-\beta^p_j)^{4(M^2+1)d}(\prod_{j=0}^{k-1}\Theta_j )^{qd}(\frac{\lambda}{M^{2(q-1)}})^{d(k-P(k))}.\\
\end{aligned}
\end{equation*}
Combing (\ref{639}), (\ref{pk}),   (\ref{prk}) and (3) of Lemma \ref{l10}, we have
\begin{equation*}
\liminf\limits_{k \to\infty} (\prod_{j=0}^{k-1} \frac{\sum_{i=1}^{N(J_j)}\left | J_{j,i} \right |^d }{\left | J_j \right |^d })^{\frac{1}{k} }\ge \liminf\limits_{k \to\infty} \eta^{\frac{PR(k)}{k}}(\prod_{j=0}^{k-1}\Theta_j )^{\frac{qd}{k}}(\frac{\lambda}{M^{2(q-1)}})^{\frac{d(k-P(k))}{k}}=\eta>1.
\end{equation*}

Thus, there exists a constant $C_1>0$ such that
\begin{equation*}
\frac{\mu_d(J_k)}{\left | J_k \right |^d }=\prod_{j=0}^{k-1}\frac{\left | J_j \right |^d}{\sum_{i=1}^{N(J_j)}\left | J_{j,i} \right |^d}\le C_1.
\end{equation*}

\end{proof}

\subsection{The proof of Theorem \ref{thm2} }
We begin to finish the proof Theorem \ref{thm2}. Let $E=E(I_{0},\left\{n_{k}\right\},\left\{c_{k}\right\})$ be a homogeneous Moran set which satisfies the conditions of Theorem \ref{thm2}, $f$ be a \text{\rm1}-dimensional quasisymmetric mapping, for any $x\in f(E)$, define $\delta=\sup\{r:|f^{-1}(B(x,r))|<\delta^{*}_{0}\}$.
Since $f$ is a homeomorphism, $F_{x}(r)=|f^{-1}(B(x,r)|$ is a monotonically increasing function with $\lim_{r\to 0}F_{x}(r)=0$.

(i) If $E$ satiefies the condition {\rm(A)} of Theorem {\rm\ref{thm2}}, then for any $0<r<\delta$, there exists a   positive integer $m$ satisfying
$$\min_{I\in \mathcal{T}_m }\left | I \right |\le \left | f^{-1}\big(B(x,r)\big) \right |< \min_{I\in \mathcal{T}_{m-1} }\left | I \right |. $$
Which implies that  the number of the branches of $\mathcal{T}_{m-1}$ intersecting  $f^{-1}(B(x,r))$ is at most $2$, then $f^{-1}(B(x,r))$ intersects at most $2M^2$ branches of $\mathcal{T}_{m}$, therefore
$B(x,r)$ intersects at most $2M^2$ branches of $f(T_{m})$. The branches of $f(T_{m})$ which intersect $B(x,r)$ is denoted by $U_1,U_2, \cdots, U_l(1\le l\le2M^2)$, then
$${B(x,r) }\cap f(E)\subset U_1\cup U_2 \cup \cdots \cup U_l.$$

By proposition $1$, we have
\begin{equation}\label{627}
\mu_d\big(B(x,r)\big)=\mu_d\big(B(x,r)\cap f(E)\big)\le \sum_{j=1}^{l}\mu_d(U_j)\le C_1\sum_{j=1}^{l}\left | U_j \right |^d.
\end{equation}
Notice that
$$\min_{I\in \mathcal{T}_m }\left | I \right |\le \left | f^{-1}\big(B(x,r)\big) \right |,\quad \max_{I\in \mathcal{T}_m }\left | I \right |\le2\omega_1 \min_{I\in \mathcal{T}_m }\left | I \right |,$$
then for any $1\le j \le l$, we have
$$\left | f^{-1}(U_j) \right | \le \max_{I\in \mathcal{I}_m }\left |I \right | \le 2\omega_1 \min_{I\in \mathcal{I}_m }\left | I \right |\le 2\omega_1 \left | f^{-1}\big(B(x,r)\big) \right |.$$
Since $B(x,r)\cap U_j\ne \emptyset$, we obtain
$$f^{-1}(U_j)\subset 6\omega_1 f^{-1}\big(B(x,r)\big),$$
where for any interval $I$ and $\rho>0$, $\rho I$ is the interval which has the same center with $I$ and length of it is $\rho \left|I\right|$.

By  Lemma $2$, since $f$ is a homeomorphism, we have
\begin{equation}\label{628}
\left | U_j \right |\le \left | f\Big(6\omega_1 f^{-1}\big(B(x,r)\big)\Big) \right |\le K_{6\omega_1}\left | B(x,r) \right |\le 2K_{6\omega_1}r,
\end{equation}
then by (\ref{627}), (\ref{628}) and $1\le l \le 2M^2$, we obtain
\begin{equation}
\begin{aligned}
\mu_d\big(B(x,r)\big)&\le C_1\sum_{j = 1}^{l}\left | U_j \right |^d\\
&\le C_1 \cdot  2M^2(2K_{6\omega_1}r)^d\\
&\le 4K^d_{6\omega_1}M^2C_1r^d\\
&\triangleq C_2r^d,\nonumber
\end{aligned}
\end{equation}
therefore
$$\limsup\limits_{r \to 0}\frac{\mu_d\big(B(x,r)\big)}{r^d}\le C_2.$$

Since $x\in f(E)$ is arbitrary, we have $\dim_{H}f(E)\ge d$ by  (2) of Lemma \ref{l1}. Since $d\in(0,1)$ is arbitrary, we obtain that $\dim_H f(E)\ge 1$. It is obvious that $\dim_H f(E)\le 1$, then we have $\dim_{H}f(E)=1$.

(ii) If $E$ satiefies the condition {\rm(B)} of Theorem {\rm\ref{thm2}}, By the similar proof of (i)(repalce $\omega_1$ by $\omega_2+1$), we obtain that there is a constant $C_3>0$ satisfying
$$\limsup\limits_{r \to 0}\frac{\mu_z\big(B(x,r)\big)}{r^z}\le C_3.$$

Since $x\in f(E)$ is arbitrary, we have $\dim_{H}f(E)\ge d$ by  (2) of Lemma \ref{l1}. Since $d\in(0,1)$ is arbitrary, we obtain that $\dim_H f(E)\ge 1$. It is obvious that $\dim_H f(E)\le 1$, then we have $\dim_{H}f(E)=1$.

We finish the proof of Theorem 2.

\bigskip

\textbf{Acknowledgement} The authors thank the reviewers for their helpful comments and suggestions.

\bigskip


\bigskip

\end{document}